\numberwithin{equation}{section}
\newtheorem{theorem}{\textbf{Theorem}}[section]
\newtheorem{lemma}[theorem]{\textbf{Lemma}}
\newtheorem{corollary}[theorem]{\textbf{Corollary}}
\newtheorem{proposition}[theorem]{\textbf{Proposition}}
\newtheorem{remark}{\textbf{Remark}}[section]
\begin{document}
\author{RAN WEI}
\thanks{Department of Mathematics, National University of Singapore, 10 Lower Kent Ridge Road, 119076 Singapore. Email address: weiran@u.nus.edu}
\title{FREE ENERGY OF THE CAUCHY DIRECTED POLYMER MODEL AT HIGH TEMPERATURE}
\date{}
\begin{abstract}
We study the Cauchy directed polymer model on $\mathbb{Z}^{1+1}$, where the underlying random walk is in the domain of attraction to the $1$-stable law. We show that, if the random walk satisfies certain regularity assumptions and its symmetrized version is recurrent, then the free energy is strictly negative at any inverse temperature $\beta>0$. Moreover, under additional regularity assumptions on the random walk, we can identify the sharp asymptotics of the free energy in the high temperature limit, namely, 
\begin{equation*}
\lim\limits_{\beta\to0}\beta^{2}\log(-p(\beta))=-c.
\end{equation*}
\\
AMS 2010 subject classification: 60K35, 82D60, 82B44\\
\\
\textbf{Keywords:} Cauchy Directed Polymer, Free Energy, Very Strong Disorder.
\end{abstract}
\maketitle
\section{Introduction}
In this paper, we study a specific long-range directed polymer model, that is, the Cauchy directed polymer model on $\mathbb{Z}^{1+1}$. The long-range directed polymer model is an extension of the classic nearest-neighbor directed polymer model. For details about the nearest-neighbor model, we refer to \cite{Com17,CSY04, Hol09}; for details about the long-range model, we refer to \cite{Com07,Wei16}.
\subsection{The model}\label{s101}
We now introduce the Cauchy directed polymer model on $\mathbb{Z}^{1+1}$. The model consists of a random field and a heavy-tailed random walk on $\mathbb{Z}$, whose increment distribution is in the domain of attraction of the $1$-stable law. The random field models the random environment and the random walk models the polymer chain. The polymer chain interacts with the random environment.  We want to investigate whether this interaction significantly influences the behavior of the polymer chain compared to the case with no random environment.

To be precise, we denote the random walk, its probability and expectation by $S=(S_{n})_{n\geq0}, \mathbf{P}$, and $\mathbf{E}$ respectively. The random walk $S$ starts at $0$ and has i.i.d.\ increments satisfying
\begin{equation}\label{e101}
\begin{cases}
&\mathbf{P}(S_{1}-S_{0}>k)/\mathbf{P}(|S_{1}-S_{0}|>k)\sim p,\\
&\mathbf{P}(|S_{1}-S_{0}|>k)\sim k^{-1}L(k),
\end{cases}
\quad\quad\mbox{for some}~p\in[0,1]~\mbox{as}~k\to\infty,
\end{equation}
where $L(\cdot)$ is a function slowly varying at infinity, i.e., $L(\cdot):(0,\infty)\to(0,\infty)$ and for any $a>0, \lim_{t\to\infty}L(at)/L(t)=1$. The condition \eqref{e101} is necessary and sufficient for $S_{1}$ to belong to the domain of attraction of the $1$-stable law, i.e.\, there exist a positive sequence $(a_{n})_{n\geq1}$ and a real sequence $(b_{n})_{n\geq1}$, such that
\begin{equation}\label{e102}
\frac{S_{n}-b_{n}}{a_{n}}\overset{d}{\to}G,\quad\mbox{as}~n\to\infty,
\end{equation}
where $\overset{d}{\to}$ stands for weak convergence and $G$ is some $1$-stable random variable. When $G$ is symmetric, it is known as the Cauchy distribution. In this paper, with a slight abuse of terminology, we say that any $1$-stable law is \textit{Cauchy} for convenience.

Convergence \eqref{e102} is a well-known result. It can be shown that
\begin{align}
\label{e103}&n\mathbf{P}(|S_{1}|>a_{n})\sim 1,\quad\mbox{as}~n\to\infty.\\
\label{e104}&b_{n}=\begin{cases}
n\mathbf{E}[S_{1}],&\mbox{if}~\mathbf{E}[|S_{1}|]<\infty,\\
n\mathbf{E}[S_{1}\mathbbm{1}_{\{|S_{1}|\leq a_{n}\}}],~&\mbox{if}~\mathbf{E}[|S_{1}|]=\infty.
\end{cases}
\end{align}
Furthermore, we have:
\begin{equation}\label{e105}
a_{n}=n\varphi(n)
\end{equation}
with $\varphi(n)=n^{-1}\sup\{x: x^{-1}nL(x)\leq1\}$, where $\varphi(\cdot)$ can be proved to be slowly varying at infinity.

The random field, its probability and expectation are denoted by $\omega:=(\omega_{n,x})_{n\in\mathbb{N}, x\in\mathbb{Z}}, \mathbb{P}$ and $\mathbb{E}$ respectively. Here $\omega$ is a family of i.i.d. random variables independent of the random walk $S$. We assume that $\omega$'s moment generating function is finite in a neighborhood of $0$, meaning that there exists a constant $c>0$, such that
\begin{equation}\label{e106}
\lambda(\beta):=\log\mathbb{E}[\exp(\beta\omega_{n,x})]<\infty,\quad\forall\beta\in(-c,c).
\end{equation}
Beside \eqref{e106}, we also assume that
\begin{equation}\label{e107}
\mathbb{E}[\omega_{n,x}] = 0\quad\mbox{and}\quad\mathbb{E}[(\omega_{n,x})^{2}]=1.
\end{equation}
Given the random environment $\omega$ and polymer length $N$, the law of the polymer is defined via a Gibbs transformation of the law of the underlying random walk, namely, 
\begin{equation*}
\frac{\mbox{d}\mathbf{P}_{N,\beta}^{\omega}}{\mbox{d}\mathbf{P}}(S):=\frac{1}{Z_{N,\beta}^{\omega}}
\exp\left(\sum\limits_{n=1}^{N}\beta\omega_{n,S_{n}}\right),
\end{equation*}
where $\beta>0$ is the inverse temperature and
\begin{equation*}
Z_{N,\beta}^{\omega}=\mathbf{P}\left[\exp\left(\sum\limits_{n=1}^{N}\beta\omega_{n,S_{n}}\right)\right]
\end{equation*}
is the partition function.

It turns out that $Z_{N,\beta}^{\omega}$ plays a key role in the study of the directed polymer model. In \cite{Bol89}, Bolthausen first showed that the normalized partition function 
\begin{equation*}
\hat{Z}_{N,\beta}^{\omega}:=\exp(-N\lambda(\beta))Z_{N,\beta}^{\omega}
\end{equation*}
converges to a limit $\hat{Z}_{\infty,\beta}^{\omega}$ almost surely with either $\mathbb{P}(\hat{Z}_{\infty,\beta}^{\omega}=0)=0$ or $\mathbb{P}(\hat{Z}_{\infty,\beta}^{\omega}=0)=1$ (depending on $\beta$). The range of $\beta$ satisfying the former is called \textit{the weak disorder regime} and the range of $\beta$ satisfying the latter is called \textit{the strong disorder regime}. It has been shown (\textit{cf}. \cite{CY06,Wei16}) that in the weak disorder regime, the polymer chain still fluctuates on scale $a_{n}$, similar to the underlying random walk. This phenomenon is called \textit{delocalization}. It is believed that in the strong disorder regime, there should be a narrow corridor in space-time with distance to the origin much larger than $a_{n}$ at time $n$, to which the polymer chain is attracted with high probability. This phenomenon is called \textit{localization}.

There actually exists a stronger condition than strong disorder, which we now introduce. As in the physics literature, we define \textit{the free energy of the system} by
\begin{equation}\label{e108}
p(\beta):=\lim\limits_{N\to\infty}\frac{1}{N}\log\hat{Z}_{N,\beta}^{\omega}.
\end{equation}
Celebrated results like \cite[Proposition 2.5]{CSY03} and \cite[Proposition 3.1]{Com07} show that the limit in \eqref{e108} exists almost surely and
\begin{equation}\label{e109}
p(\beta)=\lim\limits_{N\to\infty}\frac{1}{N}\mathbb{E}[\log\hat{Z}_{N,\beta}^{\omega}]
\end{equation}
is non-random. By Jensen's inequality, we have a trivial bound $p(\beta)\leq0$. It is easy to see that if $p(\beta)<0$, then $\hat{Z}_{N,\beta}^{\omega}$ decays exponentially fast and thus strong disorder holds. Therefore, we call the range of $\beta$ with $p(\beta)<0$ \textit{the very strong disorder regime}.

It has been shown in \cite[Theorem 3.2]{CY06} and \cite[Theorem 6.1]{Com07} that as $\beta$ increases, there is a phase transition from the weak disorder regime, through the strong disorder regime, to the very strong disorder regime, which we summarize in the following.
\begin{theorem}\label{t101}
There exist $0\leq\beta_{1}\leq\beta_{2}\leq\infty$, such that weak disorder holds if and only if $\beta\in\{0\}\cup(0,\beta_{1})$; strong disorder holds if and only if $\beta\in(\beta_{1},\infty)$; and very strong disorder holds if and only if $\beta\in(\beta_{2},\infty)$.
\end{theorem}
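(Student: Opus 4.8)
The plan is to introduce two critical parameters, to show that weak disorder and very strong disorder are each one‑sided monotone in $\beta$, and then to read off $\beta_{1}\le\beta_{2}$ from the elementary implication ``very strong disorder $\Rightarrow$ strong disorder''. Set
\[
\beta_{1}:=\sup\bigl\{\beta\ge0:\ \hat{Z}_{\infty,\beta}^{\omega}>0\ \text{$\mathbb{P}$-a.s.}\bigr\},
\qquad
\beta_{2}:=\sup\bigl\{\beta\ge0:\ p(\beta)=0\bigr\},
\]
both valued in $[0,\infty]$. Since $\hat{Z}_{N,0}^{\omega}\equiv1$ and $p(0)=0$, the value $\beta=0$ lies in both defining sets, and by \eqref{e109} together with Jensen's inequality $p(\beta)\le0$ for every $\beta$. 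Granting the two monotonicity statements below, ``strong disorder for $\beta\in(\beta_{1},\infty)$'' is then automatic: by Bolthausen's dichotomy \cite{Bol89} the failure of weak disorder \emph{is} strong disorder, and weak disorder fails for every $\beta>\beta_{1}$.

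\emph{Monotonicity of weak disorder.} I claim that $\hat{Z}_{\infty,\beta_{0}}^{\omega}>0$ a.s.\ for some $\beta_{0}>0$ implies $\hat{Z}_{\infty,\beta}^{\omega}>0$ a.s.\ for every $\beta\in(0,\beta_{0})$, and I would argue through the overlap, in the spirit of \cite{CSY03,CY06}. Telescoping and expanding the martingale increments $\log(\hat{Z}_{n,\beta}^{\omega}/\hat{Z}_{n-1,\beta}^{\omega})$ yields
\[
\log\hat{Z}_{N,\beta}^{\omega}=M_{N}-\tfrac12\,v(\beta)\sum_{n=1}^{N}I_{n}(\beta)+(\text{error}),
\]
where $I_{n}(\beta):=\sum_{x}\mathbf{P}_{n-1,\beta}^{\omega}(S_{n}=x)^{2}$ is the time‑$n$ replica overlap, $v(\beta)=e^{\lambda(2\beta)-2\lambda(\beta)}-1>0$, $M_{N}$ is an $L^{2}$ martingale whose predictable bracket is comparable to $\sum_{n}I_{n}(\beta)$, and the error (of order $\sum_{n}I_{n}(\beta)^{3/2}$, hence negligible) is controlled using \eqref{e103} and $I_{n}(\beta)\le1$. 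It follows that $\hat{Z}_{\infty,\beta}^{\omega}>0$ a.s.\ is equivalent to $\sum_{n\ge1}I_{n}(\beta)<\infty$ a.s. It then remains to check that this summability is nonincreasing in $\beta$: raising $\beta$ concentrates the polymer measure and hence enlarges the overlaps, so $\sum_{n}I_{n}(\beta_{0})<\infty$ should force $\sum_{n}I_{n}(\beta)<\infty$ for all smaller $\beta$. I expect this last point — made rigorous through a differentiation identity for $\mathbb{E}[I_{n}(\beta)]$ combined with an FKG/correlation inequality in the environment $\omega$ — to be the main obstacle, and also the step most sensitive to the long‑range, borderline ($\alpha=1$) nature of $S$: the regularity of the local limit behind \eqref{e102} enters the error estimates in the displayed expansion and must be handled more carefully than in the nearest‑neighbour setting.

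\emph{Existence of $\beta_{2}$.} Taking $\mathbb{E}$ in the same decomposition and invoking \eqref{e109} gives
\[
p(\beta)=-\tfrac12\,v(\beta)\,\lim_{N\to\infty}\frac{1}{N}\sum_{n=1}^{N}\mathbb{E}\bigl[I_{n}(\beta)\bigr]\ +\ (\text{lower-order averaged error}),
\]
so that $p(\beta)<0$ is equivalent to the polymer being localized at positive density, $\liminf_{N}\frac1N\sum_{n=1}^{N}\mathbb{E}[I_{n}(\beta)]>0$, which is the long‑range analogue of the localization criterion of \cite{CSY03,Com07}. The same monotonicity of $\beta\mapsto\mathbb{E}[I_{n}(\beta)]$ then shows that $p(\beta_{0})<0$ forces $p(\beta)<0$ for all $\beta>\beta_{0}$; thus $\{\beta\ge0:p(\beta)=0\}$ is an interval $[0,\beta_{2}]$ and very strong disorder holds on $(\beta_{2},\infty)$.

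Finally, $\beta_{1}\le\beta_{2}$: if not, pick $\beta\in(\beta_{2},\beta_{1})$. Since $\beta>\beta_{2}$ we have $p(\beta)<0$, so by \eqref{e108} $\hat{Z}_{N,\beta}^{\omega}\to0$ and strong disorder holds at $\beta$; but $\beta<\beta_{1}$ gives weak disorder at $\beta$ by the first step, a contradiction. Hence $0\le\beta_{1}\le\beta_{2}\le\infty$ with the asserted disorder regimes.
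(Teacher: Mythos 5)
The paper does not prove Theorem~\ref{t101} at all: it is quoted as a known result, citing \cite[Theorem~3.2]{CY06} and \cite[Theorem~6.1]{Com07}, so there is no ``paper's proof'' to compare against in detail. What you have produced is an attempted original proof, and it contains a genuine gap precisely where you flag one.

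Your reduction of weak disorder to the almost-sure summability of the replica overlap $\sum_{n}I_{n}(\beta)$, and of very strong disorder to a positive-density overlap criterion, is a correct and standard rephrasing (Comets--Shiga--Yoshida, Carmona--Hu). The decisive step, however, is the monotonicity in $\beta$ of these two regimes, and your argument for it --- ``raising $\beta$ concentrates the polymer measure and hence enlarges the overlaps'' --- is a heuristic, not a proof. There is no known FKG-type inequality that makes $\beta\mapsto I_{n}(\beta)$ or $\beta\mapsto\mathbb{E}[I_{n}(\beta)]$ monotone; the quenched Gibbs measure $\mathbf{P}_{N,\beta}^{\omega}$ does not satisfy the lattice/positive-association structure such an inequality would require, and in fact the route through overlap monotonicity is not what the cited references do. Everything else in your write-up (the martingale decomposition, the identification $v(\beta)=e^{\lambda(2\beta)-2\lambda(\beta)}-1>0$, the final contradiction giving $\beta_{1}\leq\beta_{2}$) is conditional on this unproved monotonicity, so the argument does not close.

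The actual mechanism in \cite{CY06,Com07} is a coupling/conditional-expectation argument rather than overlap monotonicity. For Gaussian disorder one writes $\omega\overset{d}{=}s\omega'+\sqrt{1-s^{2}}\,\omega''$ with $\omega',\omega''$ i.i.d.\ copies; integrating out $\omega''$ yields the identity
\begin{equation*}
\hat{Z}_{N,s\beta_{0}}^{\omega'}=\mathbb{E}\bigl[\hat{Z}_{N,\beta_{0}}^{\omega}\,\big|\,\omega'\bigr],\qquad s\in[0,1],
\end{equation*}
and since conditional expectations of a uniformly integrable family are uniformly integrable, and weak disorder is equivalent to uniform integrability of $(\hat{Z}_{N,\beta})_{N}$, weak disorder is inherited downward in $\beta$. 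An analogous conditional-expectation (or, for the free energy, a concavity-of-$p(\cdot)+\lambda(\cdot)$ type) argument gives that $\{\beta:p(\beta)<0\}$ is also an up-set. For non-Gaussian $\omega$ satisfying \eqref{e103} the same conclusion is reached via a tilting/convolution variant of the coupling. If you want a self-contained proof rather than a citation, this is the ingredient you are missing; replacing the overlap-monotonicity heuristic with it makes your outline correct.
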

In \cite[Proposition 1.13]{Wei16}, the author showed that for the Cauchy directed polymer with $b_{n}\equiv0$ in \eqref{e102}, $\beta_{1}=0$ if and only if the random walk $S$ is recurrent. Let $\tilde{S}$ be an independent copy of $S$. Since $S-\tilde{S}$ is symmetric and thus $b_{n}\equiv0$ in \eqref{e102}, one can easily check that the recurrence of $S-\tilde{S}$ is also equivalent to $\beta_{1}=0$ by the same method used in \cite[Proposition 1.13]{Wei16}. When $\beta_{1}=0$, the model is called \textit{disorder relevant}, since for arbitrarily small $\beta>0$, disorder modifies the large scale behavior of the underlying random walk.

It is conjectured that $\beta_{1}=\beta_{2}$, i.e., the strong disorder regime coincides with the very strong disorder regime (excluding the critical $\beta$). So far, the conjecture has only been proved for the nearest-neighbor directed polymer on $\mathbb{Z}^{d+1}$ for $d=1$ in \cite{CV06} and $d=2$ in \cite{Lac10}, and for the long-range directed polymer with underlying random walks in the domain of attraction of an $\alpha$-stable law for some $\alpha\in(1,2]$ in \cite{Wei16}.

The main purpose of this paper is to show that for disorder relevant Cauchy directed polymer, under some regularity assumptions on the random walk, $\beta_{1}=\beta_{2}$, i.e., $\beta_{2}=0$. We will present the precise results in the next subsection.
\subsection{Main results}
Recall that $S$ is the random walk defined in \eqref{e101} and $\tilde{S}$ is an independent copy of $S$. Note that the expected local time of $S-\tilde{S}$ at the origin up to time $N$ is given by
\begin{equation}\label{e110}
D(N):=\sum\limits_{n=1}^{N}\mathbf{P}^{\bigotimes2}(S_{n}=\tilde{S}_{n})=\sum\limits_{n=1}^{N}\sum\limits_{x\in\mathbb{Z}}\mathbf{P}(S_{n}=x)^{2},
\end{equation}
where $\mathbf{P}^{\bigotimes2}$ is the probability on product space. The quantity $D(\cdot)$ is known as the overlap, which will be crucial in our analysis.

Note that $S-\tilde{S}$ is symmetric, and by \cite[Chapter VIII.8, Corollary]{Fel66},
\begin{equation}\label{e111}
\frac{S_{n}-\tilde{S}_{n}}{a_{n}}\overset{d}{\to}H,\quad\mbox{as}~n\to\infty,
\end{equation}
where $a_{n}$ is the same as in \eqref{e102} and $H$ is some symmetric Cauchy random variable. If $(S-\tilde{S})/h$ is an irreducible aperiodic random walk on $\mathbb{Z}$, then by Gnedenko's local limit theorem (\textit{cf.} \cite[Theorem 8.4.1]{BGT89}),
\begin{equation}\label{e112}
\mathbf{P}^{\bigotimes2}(S_{n}=\tilde{S}_{n})\sim\frac{g(0)h}{a_{n}},
\end{equation}
where $g(\cdot)$ is the density function for $H$. Hence, $S-\tilde{S}$ is recurrent if and only if $\sum_{n=1}^{\infty}a_{n}^{-1}=\infty$. Therefore, for disorder relevant Cauchy directed polymer model, the overlap $D(N)$ tends to infinity as $N$ tends to infinity.

We mention that in \cite[Proposition 3.1]{Wei16}, the author showed that
\begin{equation}\label{e113}
\sum\limits_{n=1}^{\infty}\frac{1}{nL(n)}=\infty\Leftrightarrow\sum\limits_{n=1}^{\infty}\frac{1}{a_{n}}=\infty,
\end{equation}
where $L(\cdot)$ was introduced in \eqref{e101}. When an explicit close form for $a_{n}$ is hard to deduce, \eqref{e113} provides an alternative way for checking the recurrence of $S-\tilde{S}$.

To prove $\beta_{2}=\beta_{1}=0$, we need an extra assumption on the distribution of $S$, which is
\begin{equation}\label{e114}
\begin{split}
\mathbf{P}(S_{1}=k)\sim pL(k)k^{-2},\quad\mbox{as}~k\to\infty,\\
\mathbf{P}(S_{1}=-k)\sim qL(k)k^{-2},\quad\mbox{as}~k\to\infty.
\end{split}
\end{equation}
By \cite[Proposition 1.5.10]{BGT89}, the stronger regular condition \eqref{e114} implies \eqref{e101}. The reason that we assume \eqref{e114} is that we want to have a better control of the local behavior of $S$. The following result will be used in our proof.
\begin{theorem}[{\hspace{1sp}\cite[Theorem 2.4]{Ber17}}]\label{t102}
Let $S$ be a random walk that satisfies \eqref{e114}, and $a_{n}$ and $b_{n}$ be the constants in \eqref{e102}. Then there exist positive constants $c_{1}$ and $c_{2}$, such that for any $|k|\geq a_{n}$ with $\mathbf{P}(S_{n}-\lfloor b_{n}\rfloor=k)\neq0$,
\begin{equation}\label{e115}
c_{1}nL(|k|)k^{-2}\leq\mathbf{P}(S_{n}-\lfloor b_{n}\rfloor=k)\leq c_{2}nL(|k|)k^{-2}.
\end{equation}
\end{theorem}
\begin{remark}\label{r101}
Although only the upper bound for $\mathbf{P}(S_{n}-\lfloor b_{n}\rfloor=|k|)$ was presented in \cite[Theorem 2.4]{Ber17}, the author also showed that if $|k|/a_{n}\to\infty$ as $n\to\infty$, then
\begin{equation}\label{e116}
\mathbf{P}(S_{n}-\lfloor b_{n}\rfloor=|k|)\sim(p\mathbbm{1}_{k>0}+q\mathbbm{1}_{k<0})nL(|k|)k^{-2},\quad\mbox{as}~n\to\infty.
\end{equation}
One may check that the lower bound in \eqref{e115} can be proved by the method developed in proving \eqref{e116}. Note that by \eqref{e103},
\begin{equation}\label{e117}
\frac{nL(|k|)}{k^{2}}\sim\frac{a_{n}L(|k|)}{k^{2}L(a_{n})},\quad\mbox{as}~n\to\infty, 
\end{equation}
which will be useful later.
\end{remark}
Now we are ready to present our main results. Recall $\beta_{1}$ and $\beta_{2}$ from Theorem \ref{t101}. Throughout the rest of this paper, we assume $\beta_{1}=0$, i.e., the model is disorder relevant, which is an equivalent condition for $S-\tilde{S}$ to be recurrent according to the statement right below Theorem \ref{t101}.

We first show that with some extra assumption on the underlying random walk $S$, $\beta_{2}=\beta_{2}=0$, i.e, the free energy is strictly negative as soon as $\beta>0$.
\begin{theorem}\label{t103}
Let the Cauchy directed polymer model be defined as in Subsection \ref{s101}. We assume that the underlying random walk $S$ satisfies \eqref{e114} and $S-\tilde{S}$ is recurrent. We set
\begin{equation}\label{e118}
D^{-1}(x):=\max\{N: D(N)\leq x\}.
\end{equation}
If the centering constant $b_{n}\equiv0$ in \eqref{e102}, then for arbitrarily small $\epsilon>0$, there exists a $\beta^{(1)}>0$, such that for any $\beta\in(0,\beta^{(1)})$,
\begin{equation}\label{e119}
p(\beta)\leq-(D^{-1}((1+\epsilon)\beta^{-2}))^{-(1+\epsilon)}.
\end{equation}
\end{theorem}
If we drop the assumption $b_{n}\equiv0$, then some technical difficulties will arise. We will elaborate on this point when we prove Theorem \ref{t103}.

We can also give a lower bound for the free energy, and the lower bound is valid under fairly general conditions.
\begin{theorem}\label{t104}
Let the Cauchy directed polymer model be defined as in Subsection \ref{s101}. If $S-\tilde{S}$ is recurrent, then for arbitrarily small $\epsilon>0$, there exists a $\beta^{(2)}>0$, such that for $\beta\in(0,\beta^{(2)})$,
\begin{equation}\label{e120}
p(\beta)\geq-D^{-1}((1-\epsilon)\beta^{-2})^{-(1-\epsilon)}.
\end{equation}
\end{theorem}
Note that in Theorem \ref{t104}, the underlying random walk $S$ only needs to satisfy \eqref{e101}. Neither \eqref{e114} nor $b_{n}\equiv0$ are needed.

In particular, if $S$ satisfies \eqref{e114} with the slowly varying function $L(\cdot)\equiv c$, then $a_{n}$ can be chosen to be $c(p+q)n$. Hence, $D(N)\sim\log{N}/c(p+q)$ and $D^{-1}(x)\sim\exp(c(p+q)x)$. Since $S-\tilde{S}$ is recurrent due to \eqref{e113}, we have:
\begin{corollary}\label{c105}
Let the Cauchy directed polymer model be defined as in Subsection \ref{s101}. If the underlying random walk $S$ satisfies \eqref{e114} with $L(\cdot)\equiv c$ and the centering constant $b_{n}\equiv 0$ in \eqref{e102}, then by Theorem \ref{t103} and Theorem \ref{t104},
\begin{equation}
\lim\limits_{\beta\to0}\beta^{2}\log(-p(\beta))=-c(p+q).
\end{equation}
\end{corollary}
\subsection{Organization and discussion}
Theorem \ref{t103} will be proved in Section \ref{s200} by a now classic fractional-moment/coarse-graining/change-of-measure procedure. We will adapt the approaches developed in \cite{BL16,BL17}.

Theorem \ref{t104} will be proved in Section \ref{s300} using a second moment computation introduced in \cite{BL17} and a concentration inequality developed in \cite{CTT15}.

Although our proof techniques are adaptation of known methods, some new subtle arguments are needed, since the random walk in the Cauchy domain of attraction is much harder to deal with than $2$-dimensional simple random walk.

We believe that the approach in this paper can be applied to handle the 2 dimensional long-range directed polymer with stable exponent $\alpha=2$, which is the critical case for long-range directed polymer on $\mathbb{Z}^{2+1}$. With some regularity assumption on the underlying random walk $S$, one can prove $\beta_{2}=\beta_{1}=0$ if $S-\tilde{S}$ is recurrent by our methods. 

It dose not seem likely that one can prove the upper bound \eqref{e119} under the general condition in Theorem \ref{t104} by the fractional-moment/coarse-graining/change-of-measure procedure. One may have to find a totally new approach to deal with the upper bound in the general case.
\section{Proof of Theorem \ref{t103}}\label{s200}
We start with the fractional-moment method. Recall \eqref{e109}, for any $\theta\in(0,1)$,
\begin{equation*}
p(\beta)=\lim\limits_{N\to\infty}\frac{1}{N}\mathbb{E}[\log\hat{Z}_{N,\beta}^{\omega}]\leq\varliminf\limits_{N\to\infty}\frac{1}{\theta N}\log\mathbb{E}[(\hat{Z}_{N,\beta}^{\omega})^{\theta}]
\end{equation*}
by Jensen's inequality. In this proof, $\theta$ cannot be chosen arbitrarily. In fact, we will see later that $\theta$ should be larger than $1/2$. Then our strategy is to chose a coarse-graining length $l=l(\beta)$, write $N=ml$, and let $m$ tend to infinity. Along the subsequence $N=ml$, we have
\begin{equation*}
p(\beta)\leq\varliminf\limits_{m\to\infty}\frac{1}{ml\theta}\log\mathbb{E}[(\hat{Z}_{ml,\beta}^{\omega})^{\theta}].
\end{equation*}
If we can prove
\begin{equation}\label{e201}
\mathbb{E}[(\hat{Z}_{ml,\beta}^{\omega})^{\theta}]\leq2^{-m},
\end{equation}
then we obtain $p(\beta)<0$. In order to further prove the upper bound \eqref{e117} for any $\epsilon>0$, one appropriate choice of $l$ is
\begin{equation}\label{e202}
l=l(\beta):=\inf\{n\in\mathbb{N}: D(\lfloor n^{1-\epsilon^{2}}\rfloor)\geq(1+\epsilon)\beta^{-2}\}.
\end{equation}
Note that $D(N)$ tends to infinity as $N$ tends to infinity, since $S-\tilde{S}$ is recurrent. Thus, $l$ tends to infinity as $\beta$ tends to $0$.

Now we introduce the coarse-graining method. First, we partition all real number $\mathbb{R}$ into blocks of size $a_{l}$ by setting
\begin{equation*}
I_{y}:=ya_{l}+(-a_{l}/2, a_{l}/2],\quad\forall y\in\mathbb{Z},
\end{equation*}
where $a_{l}$ is the scaling constant in \eqref{e102}. Since $a_{l}$ tends to infinity as $l$ tends to infinity, we can choose $a_{l}$ to be an integer and thus $ya_{l}$ is also an integer. Note that $(I_{y})_{y\in\mathbb{Z}}$ is a disjoint family and $\cup_{y\in\mathbbm{Z}}I_{y}=\mathbbm{R}$. Next, for any $\mathcal{Y}=(y_{1},\cdots,y_{m})$, define
\begin{equation*}
\mathcal{T}_{\mathcal{Y}}=\{S_{il}\in I_{y_{i}}, \mbox{for}~1\leq i\leq m\},
\end{equation*}
and we say $\mathcal{Y}$ is a coarse-grained trajectory for $S\in\mathcal{T}_{\mathcal{Y}}$. We can now decompose the partition function $\hat{Z}_{ml,\beta}^{\omega}$ in terms of different coarse-grained trajectories by
\begin{equation*}
\hat{Z}_{ml,\beta}^{\omega}=\sum\limits_{\mathcal{Y}\in\mathbb{Z}^{m}}\mathbf{E}\left[\exp\left(\sum\limits_{n=1}^{ml}(\beta\omega_{n,S_{n}}-\lambda(\beta))\right)\mathbbm{1}_{\{S\in\mathcal{T}_{\mathcal{Y}}\}}\right]:=\sum\limits_{\mathcal{Y}\in\mathbb{Z}^{m}}Z_{\mathcal{Y}}.
\end{equation*}
By the inequality $(\sum_{n}a_{n})^{\theta}\leq\sum_{n}a_{n}^{\theta}$ for positive sequence $(a_{n})_{n}$ and $\theta\in(0,1]$,
\begin{equation}\label{e203}
\mathbb{E}[(\hat{Z}_{ml,\beta}^{\omega})^{\theta}]\leq\sum\limits_{\mathcal{Y}\in\mathbb{Z}^{m}}\mathbb{E}[(Z_{\mathcal{Y}})^{\theta}].
\end{equation}
Therefore, to prove \eqref{e201}, we only need to prove
\begin{proposition}\label{p201}
If $l$ is sufficiently large, then uniformly in $m\in\mathbb{N}$, we have
\begin{equation*}
\sum\limits_{\mathcal{Y}\in\mathbb{Z}^{m}}\mathbb{E}[(Z_{\mathcal{Y}})^{\theta}]\leq2^{-m}.
\end{equation*}
\end{proposition}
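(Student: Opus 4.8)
The plan is to implement the \emph{change-of-measure} step of the fractional-moment method, bounding each $\mathbb{E}[(Z_{\mathcal{Y}})^{\theta}]$ by decoupling a ``tilting'' of the disorder from the polymer partition function. First I would fix $\theta\in(1/2,1)$ and, for a coarse-grained trajectory $\mathcal{Y}=(y_1,\dots,y_m)$, introduce a deformed measure $\widetilde{\mathbb{P}}_{\mathcal{Y}}$ on $\omega$ obtained by shifting the mean of the variables $\omega_{n,x}$ downward on the space-time region ``visited'' by $\mathcal{Y}$, namely for $n\in((i-1)l,il]$ and $x\in I_{y_i}$. A standard entropy (H\"older) inequality then gives
\begin{equation*}
\mathbb{E}[(Z_{\mathcal{Y}})^{\theta}]\leq\Big(\mathbb{E}\Big[\big(\tfrac{\mathrm{d}\mathbb{P}}{\mathrm{d}\widetilde{\mathbb{P}}_{\mathcal{Y}}}\big)^{\frac{\theta}{1-\theta}}\Big]\Big)^{1-\theta}\big(\widetilde{\mathbb{E}}_{\mathcal{Y}}[Z_{\mathcal{Y}}]\big)^{\theta},
\end{equation*}
so the task splits into (a) controlling the Radon--Nikodym cost, which for a Gaussian-type tilt of size $\delta$ on a region of $\ell$ sites contributes $\exp(c\,\delta^2\ell)$ — this is where the condition $\theta>1/2$ is needed so that $\theta/(1-\theta)>1$ stays manageable — and (b) showing that the tilted first moment $\widetilde{\mathbb{E}}_{\mathcal{Y}}[Z_{\mathcal{Y}}]$ is small.

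For step (b) I would write $\widetilde{\mathbb{E}}_{\mathcal{Y}}[Z_{\mathcal{Y}}]=\mathbf{E}\big[\exp\big(-\delta\sum_{i=1}^{m}\sum_{n=(i-1)l+1}^{il}\mathbbm{1}_{\{S_n\in I_{y_i}\}}\big)\mathbbm{1}_{\{S\in\mathcal{T}_{\mathcal{Y}}\}}\big]$, i.e.\ the tilt produces an exponential penalty proportional to the time the walk spends in its ``own'' block. The key point is that, \emph{conditionally on being in $\mathcal{T}_{\mathcal{Y}}$}, a Cauchy-domain walk on the scale $a_l$ spends a macroscopic amount of time near $y_i a_l$ on each block $((i-1)l,il]$, and this occupation time is quantified by $D(\cdot)$: by the local limit theorem \eqref{e112} the expected number of ``returns'' over a block of length $\lfloor n^{1-\epsilon^2}\rfloor$ is comparable to $D(\lfloor l^{1-\epsilon^2}\rfloor)$, which by the choice \eqref{e205} of $l$ is at least $(1+\epsilon)\beta^{-2}$. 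Choosing $\delta$ of order $\beta$ (so that the Radon--Nikodym cost in (a) is at most $\exp(c\beta^2\cdot a_l\cdot m\cdot\text{const})$, hence sub-exponentially negligible after taking the $\theta$ power against a gain) makes each block contribute a factor bounded by, say, $e^{-3}$ times the free-walk transition probability $\mathbf{P}(S_{il}\in I_{y_i}\mid S_{(i-1)l}\in I_{y_{i-1}})$. Summing the latter over all $y_i$ telescopes to $1$ per block, leaving $\sum_{\mathcal{Y}}\mathbb{E}[(Z_{\mathcal{Y}})^{\theta}]\leq (e^{-3\theta}\cdot C)^m\leq 2^{-m}$ once $l$ (equivalently $\beta^{-1}$) is large enough.

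The main obstacle — and the reason the problem is genuinely harder than the $2$-d simple random walk case of \cite{Lac10,BL17} — is getting a clean lower bound on the conditional occupation time $\widetilde{\mathbf{E}}$-penalty: the Cauchy walk is heavy-tailed, so a single big jump can carry it far from $y_i a_l$ for most of a block, and I must rule out the scenario where the walk is in $\mathcal{T}_{\mathcal{Y}}$ ``by accident'' at the endpoints $il$ while spending almost no time in the blocks $I_{y_i}$ in between. Here I would invoke Berger's bound (Theorem \ref{t102}) together with \eqref{e114} to control the probability of such large excursions within a block and to show that, up to a constant factor absorbed into $C$, the dominant contribution to $\mathcal{T}_{\mathcal{Y}}$ comes from trajectories that do accumulate local time $\gtrsim D(\lfloor l^{1-\epsilon^2}\rfloor)$ in their own tubes. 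A secondary technical point is bookkeeping the $a_l$-dependence: the tilt lives on blocks of \emph{width} $a_l$, so the Radon--Nikodym cost per block is $\exp(c\delta^2 l\, a_l)=\exp(c\delta^2 a_l\, l)$ rather than $\exp(c\delta^2 l)$, and one must check — using $\varphi$ non-decreasing and $a_l=l\varphi(l)$ — that with the above choices of $l$ and $\delta\sim\beta$ this cost is still beaten by the gain $e^{-3\theta m}$; the exponent $1+\epsilon$ (rather than $1$) in \eqref{e117} is precisely the slack one pays for this. I would organize the write-up as: (1) define the tilt and state the decoupling inequality; (2) bound the Radon--Nikodym term; (3) the occupation-time estimate via \eqref{e112} and Theorem \ref{t102}; (4) combine and sum over $\mathcal{Y}$.
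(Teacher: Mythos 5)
Your overall framework—coarse-graining, H\"older decoupling, a per-block change of measure that is summed over coarse-grained trajectories—is the right scaffolding, and it matches the paper. But the central object, the change of measure itself, is the wrong one, and this is not a cosmetic difference: with your choice the argument cannot close.

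You propose a first-order (mean-shift) tilt: lower the mean of $\omega_{n,x}$ by $\delta$ on each block $((i-1)l,il]\times\tilde{I}_{y_{i-1}}$. The Radon--Nikodym cost of such a tilt scales as $\exp(c\delta^{2}\,l\,a_{l})$ per block (there are $\sim l a_l$ sites), forcing $\delta\lesssim(l a_{l})^{-1/2}=l^{-1}\varphi(l)^{-1/2}$. The gain in the tilted first moment is at most $\exp(-\beta\delta\,\ell_{\mathrm{occ}})$ where $\ell_{\mathrm{occ}}\leq l$ is the occupation time of the tube; even taking $\ell_{\mathrm{occ}}=l$, the gain per block is $\exp(-\beta\delta l)\geq\exp(-\beta\varphi(l)^{-1/2})\to 1$ as $\beta\to0$. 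So the tilt buys essentially nothing—this is precisely the obstruction that makes the Cauchy case (like $d=2$ nearest-neighbor) resistant to the one-dimensional mean-shift of Comets--Vargas, and it is why Lacoin \cite{Lac10} and the present paper both abandon it. The paper instead uses the $q$-th-order polynomial-chaos statistic $X(\omega)$ of \eqref{e216}, with $q\to\infty$ as $\beta\to0$, and a \emph{bounded} penalty $g_{i,y}=\exp(-K\mathbbm{1}_{\{X^{(i,y)}\geq e^{K^2}\}})$. The cost $\mathbb{E}[g^{-\theta/(1-\theta)}]$ is then $\leq2$ by Chebyshev (no $\exp(c\delta^2 l a_l)$ blow-up), and the gain comes from $\mathbb{E}^{S}[X]\gg1$ (Lemma~\ref{l204}), which grows like $(1+\epsilon^2)^q$; this exponential-in-$q$ gain is unattainable with a linear shift.

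Two further points. First, you identify the occupation time of the tube with $D(\lfloor l^{1-\epsilon^2}\rfloor)$, but $D(N)=\sum_{n\leq N}\mathbf{P}^{\otimes2}(S_n=\tilde S_n)$ is a two-walk collision (Green's function) quantity, and is in no quantitative sense the occupation time of $\tilde I_{y_i}$ by a single walk conditioned on $\mathcal{T}_{\mathcal{Y}}$; that time is $\leq l$, while $D(u)\sim(1+\epsilon)\beta^{-2}\ll l$. The quantity $D(u)$ enters the paper's proof as a normalization of the $q$-fold chaos $X$ and through the collision structure of $\mathbf{P}(\underline{t},\underline{S}^{(\underline{t})})$, not as an occupation time. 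Second, the constraint $\theta>1/2$ is not where you place it: in the paper it is used in bounding the tail sum \eqref{e236}, where one needs $\theta(\gamma-2)<-1$ with $\gamma<1$ from the Potter bound; it has nothing to do with the Radon--Nikodym term, which is bounded by $2^m$ for any fixed $\theta$.

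So: right skeleton, but the key lemma you would need (that a linear tilt of size $\delta\sim\beta$ beats the R--N cost on a block of $l\cdot a_l$ sites) is false in the Cauchy regime, and the $D(\cdot)$-as-occupation-time heuristic that you rely on to set the scale of $l$ is not the mechanism the proof actually uses.
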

To prove Proposition \ref{p201}, we need a change-of-measure argument. For any $\mathcal{Y}\in\mathbb{Z}^{m}$, we introduce a positive function $g_{\mathcal{Y}}(\omega)$, which can be considered as a probability density after scaling. Then by H\"{o}lder's inequality,
\begin{equation}\label{e204}
\mathbb{E}[(Z_{\mathcal{Y}})^{\theta}]=\mathbb{E}[g_{\mathcal{Y}}^{-\theta}(g_{\mathcal{Y}}Z_{\mathcal{Y}})^{\theta}]\leq\left(\mathbb{E}[g_{\mathcal{Y}}^{-\theta/(1-\theta)}]\right)^{1-\theta}\left(\mathbb{E}[g_{\mathcal{Y}}Z_{\mathcal{Y}}]\right)^{\theta}.
\end{equation}
Here $\mathcal{M}_{g_{\mathcal{Y}}}(\cdot):=\mathbb{E}[g_{\mathcal{Y}}\mathbbm{1}_{(\cdot)}]$ can be considered as a new measure. We will choose $g_{\mathcal{Y}}$ such that the expected value of $Z_{\mathcal{Y}}$ under $\mathcal{M}_{g_\mathcal{Y}}$ is significantly smaller than that under the original measure $\mathbb{E}$, and the cost of change of measure, the term $\mathbb{E}[g_{\mathcal{Y}}^{-\theta/(1-\theta)}]$, is not too large.

To choose $g_{\mathcal{Y}}$, we need to first introduce some notation. We can first choose an integer $R$ (not dependent on $\beta$) and then define space-time blocks (with the convention $y_{0}=0$)
\begin{equation*}
B_{i,y_{i-1}}:=[(i-1)l+1,\cdots,il]\times\tilde{I}_{y_{i-1}},~\mbox{for}~i=1,\cdots,m,
\end{equation*}
where
\begin{equation}\label{e205}
\tilde{I}_{y}=ya_{l}+(-Ra_{l},Ra_{l}).
\end{equation}
Since $S$ is in the domain of attraction of a 1-stable L\'{e}vy process, the graph of $(S_{(i-1)l+k})_{k=1}^{l}$ with $S_{(i-1)l}=y_{i-1}$ is contained within $B_{i,y_{i-1}}$ with probability close to 1 when $R$ is large enough. Therefore, it suffices to perform the change of measure on $\omega$ in $B=\cup_{i=1}^{m}B_{i,y_{i-1}}$. By translation invariance, it is natural to choose
\begin{equation*}
g_{\mathcal{Y}}(\omega)=\prod\limits_{i=1}^{m}g_{i,y_{i-1}}(\omega)
\end{equation*}
such that each $g_{i,y_{i-1}}$ depends only on $\omega$ in $B_{i,y_{i-1}}$.

To make $\mathbb{E}[g_{\mathcal{Y}}Z_{\mathcal{Y}}]$ small, we can construct $g_{\mathcal{Y}}$ according to the following heuristics. we first set a threshold. For any block $B_{i,y}$, If the contribution of $\omega$ in $B_{i, y}$ to the partition function exceeds the threshold, then we choose $g_{i,y}$ to be small. If the contribution of $\omega$ in $B_{i, y}$ to the partition function is less than the threshold, then we simply set $g_{i,y}$ to be $1$. Before we present the exact construction of $g_{\mathcal{Y}}$, we need to define some auxiliary quantities, which will help us compute the contribution to $Z_{N,\beta}^{\omega}$ from each block $B_{i,y}$.

For arbitrarily small $\epsilon>0$, we introduce
\begin{equation}\label{e206}
u=u(l):=\lfloor l^{1-\epsilon^{2}}\rfloor\quad\mbox{and}\quad q=q(l):=\frac{1}{\epsilon^{2}}\max\left\{\log\left(\sqrt{\varphi(l)}\right),\log D(l)\right\},
\end{equation}
where $\varphi(\cdot)$ is the slowly varying function in \eqref{e103}. Note that by \eqref{e202}, $u$ and $q$ both tend to infinity as $\beta$ tends to $0$, and the definitions of $q$ and $u$ ensure that
\begin{equation}\label{e207}
q\ll u\ll l\quad\mbox{and}\quad1+\epsilon\leq\beta^{2}D(u)\leq1+2\epsilon.
\end{equation}
We will use \eqref{e207} repeatedly.

Then we define $X(\omega)$ depending on $\omega$ in $B_{1,0}$ by
\begin{equation}\label{e208}
X(\omega):=\frac{1}{\sqrt{2Rla_{l}}D(u)^{q/2}}\sum\limits_{\underline{x}\in(\tilde{I}_{0})^{q+1},\underline{t}\in J_{l,u}}\mathbf{P}(\underline{t},\underline{x})\omega_{\underline{t},\underline{x}}
\end{equation}
with
\begin{align*}
&\underline{x}:=(x_{0},\cdots,x_{q})\quad\mbox{and}\quad\underline{t}:=(t_{0},\cdots,t_{q}),\\
&J_{l,u}:=\{\underline{t}: 1\leq t_{0}<\cdots<t_{q}\leq l, t_{i}-t_{i-1}\leq u, \forall j=1,\cdots,q\},
\end{align*}
\begin{equation}\label{e209}
\mathbf{P}(\underline{t},\underline{x}):=\prod\limits_{i=1}^{q}\mathbf{P}(S_{t_{i}}-S_{t_{i-1}}=x_{i}-x_{i-1}),
\end{equation}
and
\begin{equation*}
\omega_{\underline{t},\underline{x}}:=\prod\limits_{i=0}^{q}\omega_{t_{i},x_{i}}，
\end{equation*}
where the constant $R$ is chosen to be the same as in \eqref{e205}.

We can regard $X(\omega)$ as an approximation of the contribution from $\omega$ in $B_{1,0}$ to the normalized partition function $\hat{Z}_{N,\beta}^{\omega}$. It can be viewed as something like the $q$-th order term in the Taylor expansion of $\hat{Z}_{N,\beta}^{\omega}$ in $\omega$. We introduce this approximation since $X(\omega)$ is a mutilinear combination of $\omega_{t,x}$'s, which is treatable, while it is rarely possible to do computation on the partition function directly. One may refer to \cite[Section 4.2]{BL16} for more discussions concerning the choice of $X(\omega)$.

It is not hard to check that by \eqref{e107} and \eqref{e110},
\begin{equation}\label{e210}
\mathbb{E}[X(\omega)]=0\quad\mbox{and}\quad\mathbb{E}[(X(\omega))^{2}]\leq1.
\end{equation}
Then, by translation invariance, for the contribution from $\omega$ in any block $B_{i,y}$, we can define
\begin{equation}\label{e211}
X^{(i,y)}(\omega):=X(\theta_{l}^{i-1,y}\omega),
\end{equation}
where $\theta_{l}^{i-1,y}\omega_{j,x}:=\omega_{j+(i-1)l,x+ya_{l}}$ is a shift operator.

Now we can set
\begin{equation}\label{e212}
g_{i,y}(\omega):=\exp\left(-K\mathbbm{1}_{\{X^{(i,y)}(\omega)\geq\exp(K^{2})\}}\right),
\end{equation}
where $K$ is a fixed constant independent of any other parameter. We then have
\begin{equation*}
\mathbb{E}[(g_{i,y})^{-\theta/(1-\theta)}]=1+(\exp(\theta K/(1-\theta))-1)\mathbb{P}(X^{(i,y)}(\omega)\geq\exp(K^{2}))\leq2
\end{equation*}
by Chebyshev's inequality and \eqref{e210} if we choose $K$ large enough. Since $g_{i,y_{i-1}}$ and $g_{j,y_{j-1}}$ are defined on disjoint blocks $B_{i,y_{i-1}}$ and $B_{j,y_{j-1}}$ for $i\neq j$, by independence of $\omega$ in $B_{i,y_{i-1}}$ and $B_{j,y_{j-1}}$,
\begin{equation}\label{e213}
\left(\mathbb{E}[g_{\mathcal{Y}}^{-\theta/(1-\theta)}]\right)^{1-\theta}=\left(\prod\limits_{i=1}^{m}\mathbb{E}[g_{i,y_{i-1}}^{-\theta/(1-\theta)}]\right)^{1-\theta}\leq2^{m(1-\theta)}\leq2^{m}.
\end{equation}

Next, we turn to analyze $\mathbb{E}[g_{\mathcal{Y}}Z_{\mathcal{Y}}]$ in \eqref{e204}. We can rewrite it as
\begin{equation}\label{e214}
\mathbb{E}[g_{\mathcal{Y}}Z_{\mathcal{Y}}]=\mathbf{E}\left[\mathbb{E}\left[g_{\mathcal{Y}}\exp\left(\sum\limits_{n=1}^{ml}(\beta\omega_{n,S_{n}}-\lambda(\beta))\right)\right]\mathbbm{1}_{\{S\in\mathcal{T}_{\mathcal{Y}}\}}\right].
\end{equation}
For any given trajectory of $S$, we define a change of measure by
\begin{equation*}
\frac{\mbox{d}\mathbb{P}^{S}}{\mbox{d}\mathbb{P}}(\omega):=\exp\left(\sum\limits_{n=1}^{ml}(\beta\omega_{n,S_{n}}-\lambda(\beta))\right).
\end{equation*}
We can check that $\mathbb{P}^{S}$ is a probability measure, and $\omega$ remains a family of independent random variables under $\mathbb{P}^{S}$, but the distribution of $\omega_{n,S_{n}}$ is exponentially tilted with
\begin{equation}\label{e215}
\mathbb{E}^{S}[\omega_{n,x}]=\lambda'(\beta)\mathbbm{1}_{\{S_{n}=x\}}\quad\mbox{and}\quad\mathbb{V}\mbox{ar}^{S}(\omega_{n,x})=1+(\lambda''(\beta)-1)\mathbbm{1}_{\{S_{n}=x\}}.
\end{equation}
One can check that
\begin{equation*}
\lim\limits_{\beta\to0}\frac{\lambda'(\beta)}{\beta}=1\quad\mbox{and}\quad\lim\limits_{\beta\to0}\lambda''(\beta)=1.
\end{equation*}
Hence, for $\epsilon$ given in Theorem \ref{t103}, when $\beta$ is sufficiently small, we have
\begin{equation}\label{e216}
\left|\frac{\lambda'(\beta)}{\beta}-1\right|\leq\epsilon^{3}\quad\mbox{and}\quad|\lambda''(\beta)-1|\leq\frac{\epsilon^{3}}{2}.
\end{equation}
By independence of $\omega$, \eqref{e214} can be further rewritten as
\begin{equation}\label{e217}
\mathbb{E}[g_{\mathcal{Y}}Z_{\mathcal{Y}}]=\mathbf{E}\left[\mathbb{E}^{S}[g_{\mathcal{Y}}] \mathbbm{1}_{\{S\in\mathcal{T}_{\mathcal{Y}}\}}\right]=\mathbf{E}\left[\prod\limits_{i=1}^{m}\mathbb{E}^{S}[g_{i,y_{i-1}}]\mathbbm{1}_{\{S_{il}\in I_{y_{i}}\}}\right].
\end{equation}
Applying the Markov property by consecutively conditioning on $S_{(m-1)l}, S_{(m-2)l}, \cdots$ and taking maximum according to $x\in I_{y_{i}-1}$ each time, \eqref{e217} can be bounded above by
\begin{equation*}
\prod\limits_{i=1}^{m}\max\limits_{x\in I_{y_{i-1}}}\mathbf{E}\left[\mathbb{E}^{S}[g_{i,y_{i-1}}]\mathbbm{1}_{\{S_{il}\in I_{y_{i}}\}}\bigg|S_{(i-1)l}=x\right].
\end{equation*}
Using translation invariance \eqref{e211} and noting that $f(y_{1},y_{2},\cdots,y_{m})=(y_{1}, y_{2}-y_{1},\cdots, y_{m}-y_{m-1})$ is a bijection from $\mathbb{Z}^{m}$ to $\mathbb{Z}^{m}$, we sum $(\mathbb{E}[g_{\mathcal{Y}}Z_{\mathcal{Y}}])^{\theta}$ over $\mathcal{Y}\in\mathbb{Z}^{m}$ and than obtain
\begin{equation}\label{e218}
\sum\limits_{\mathcal{Y}\in\mathbb{Z}^{m}}\left(\mathbb{E}[g_{\mathcal{Y}}Z_{\mathcal{Y}}]\right)^{\theta}\leq\left(\sum\limits_{y\in\mathbb{Z}}\max\limits_{x\in I_{0}}\left(\mathbf{E}^{x}\left[\mathbb{E}^{S}[g_{1,0}]\mathbbm{1}_{\{S_{l}\in I_{y}\}}\right]\right)^{\theta}\right)^{m},
\end{equation}
where $\mathbf{E}^{x}$ is the expectation with respect to $\mathbf{P}^{x}$, which is the probability measure for random walk $S$ starting at $x$.
\begin{remark}\label{r201}
Here we explain why we have to assume $b_{n}\equiv0$ in \eqref{e102}. For the coarse-grained trajectory of $S$, $S_{il}-S_{(i-1)l}-b_{l}$ should be of scale $a_{l}$. However, if $\mathbf{E}[S_{1}]$ does not exist, then $b_{n}$ may not be proportional to $n$. Hence, for $k>n$,
\begin{equation*}
(S_{kl}-b_{kl})-(S_{nl}-b_{nl})\overset{d}{=}S_{(k-n)l}-(b_{kl}-b_{nl})\overset{d}{\neq}S_{(k-n)l}-b_{(k-n)l},
\end{equation*}
which will cause the subsequent use of the Markov property to fail.

If $b_{n}$ is proportional to $n$, then when handling the coarse-grained trajectory and defining \eqref{e209}, we can replace all $S_{n}$ by $S_{n}-b_{n}$ such that all of our arguments are still valid. Nevertheless, we just assume $b_{n}\equiv0$ for simplicity.
\end{remark}
Now by \eqref{e204}, \eqref{e213} and \eqref{e218}, to prove Proposition \ref{p201}, we only need to show
\begin{proposition}\label{p202}
For small enough $\beta>0$,
\begin{equation}\label{e219}
\sum\limits_{y\in\mathbb{Z}}\max\limits_{x\in I_{0}}\left(\mathbf{E}^{x}\left[\mathbb{E}^{S}[g_{1,0}]\mathbbm{1}_{\{S_{l}\in I_{y}\}}\right]\right)^{\theta}\leq\frac{1}{4}.
\end{equation}
\end{proposition}
To prove Proposition \ref{p202}, we split the summation in \eqref{e219} into two parts.

Firstly, since $g_{1,0}\leq1$,
\begin{equation}\label{e220}
\sum\limits_{|y|\geq M}\max\limits_{x\in I_{0}}\left(\mathbf{E}^{x}\left[\mathbb{E}^{S}[g_{1,0}]\mathbbm{1}_{\{S_{l}\in I_{y}\}}\right]\right)^{\theta}\leq\sum\limits_{|y|\geq M}\max\limits_{x\in I_{0}}\mathbf{P}^{x}(S_{l}\in I_{y})^{\theta}.
\end{equation}
By Theorem \ref{t102}, when $M$ is large enough and fixed, for any $k\geq M$ and $j\in\{1,\cdots,a_{l}-1\}$,
\begin{equation*}
\mathbf{P}(S_{l}=ka_{l}+j)\leq C\frac{a_{l}L(ka_{l}+j)}{(ka_{l}+j)^{2}L(a_{l})}\leq C \frac{L(ka_{l}+j)}{k^{2}a_{l}L(a_{l})}.
\end{equation*}
Then by Potter bounds (\textit{cf}. \cite[Theorem 1.5.6]{BGT89}), for any $\gamma>0$, there exist some constant $C$, such that for $k$ and $j$,
\begin{equation*}
\frac{L(ka_{l}+j)}{L(a_{l})}\leq Ck^{\gamma}\quad\mbox{uniformly}.
\end{equation*}
Hence, the summand in \eqref{e220} can be uniformly bounded from above by $Ck^{\theta(\gamma-2)}$. Therefore, when $\gamma<1$, we can choose $\theta$ close to $1$ enough such that $\theta(\gamma-2)<-1$ and then \eqref{e220} can be bounded from above by $1/8$ for sufficiently large $M$.

Next, we turn to the control of the summand in \eqref{e219} for $|y|\leq M$. We can first apply a trivial bound
\begin{equation}\label{e221}
\mathbf{E}^{x}\left[\mathbb{E}^{S}[g_{1,0}]\mathbbm{1}_{\{S_{l}\in I_{y}\}}\right]\leq\mathbf{E}^{x}\left[\mathbb{E}^{S}[g_{1,0}]\right].
\end{equation}
Then we want to show
\begin{lemma}\label{l203}
For any $\eta>0$, we can choose $K$ large enough in \eqref{e212}, which only depends on $\eta$, such that for small enough $\beta>0$, we have
\begin{equation*}
\max\limits_{x\in I_{0}}\mathbf{E}^{x}\left[\mathbb{E}^{S}[g_{1,0}]\right]\leq\eta.
\end{equation*}
\end{lemma}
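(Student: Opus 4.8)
\textit{Proof proposal.}
The plan is to unfold $g_{1,0}$ and reduce the estimate to a concentration statement for the proxy $X$ (note that $X^{(1,0)}=X$, the shift being trivial) under the tilted environment. By \eqref{e224}, $g_{1,0}=e^{-K}$ on $\{X\geq e^{K^{2}}\}$ and $g_{1,0}=1$ otherwise, so $\mathbb{E}^{S}[g_{1,0}]=1-(1-e^{-K})\mathbb{P}^{S}(X\geq e^{K^{2}})$ and
\begin{equation*}
\mathbf{E}^{x}\!\left[\mathbb{E}^{S}[g_{1,0}]\right]=1-(1-e^{-K})\,\mathbb{Q}^{x}\!\left(X\geq e^{K^{2}}\right),
\end{equation*}
where $\mathbb{Q}^{x}$ is the joint law obtained by first sampling $S$ from $\mathbf{P}^{x}$ and then $\omega$ from $\mathbb{P}^{S}$. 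Choosing $K=K(\eta)$ so large that $1-e^{-K}\geq 1-\eta/2$, and noting that $X$ does not involve $K$, it suffices to prove
\begin{equation*}
\mathbb{Q}^{x}\!\left(X\geq e^{K^{2}}\right)\geq 1-\eta/2\qquad\text{for all }x\in I_{0}\text{ and all small }\beta,
\end{equation*}
which I would obtain from a first- and second-moment estimate for $X$ under $\mathbb{Q}^{x}$ combined with Chebyshev's inequality.

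For the first moment, $\mathbb{E}^{S}[\omega_{t,x}]=\lambda'(\beta)\mathbbm{1}_{\{S_{t}=x\}}$ and independence of the $\omega$'s leave only the configurations with $x_{i}=S_{t_{i}}$ for every $i$, producing a factor $\lambda'(\beta)^{q+1}$; the built-in constraint $\underline x\in(\tilde I_{0})^{q+1}$ costs only a bounded factor, since on the surviving configurations consecutive increments are $\leq Ra_{u}$ and $qRa_{u}=o(a_{l})$, so it reduces to requiring $S_{t_{0}}\in\tilde I_{0}$, which has $\mathbf{P}^{x}$-probability bounded below uniformly in $x\in I_{0}$ and $t_{0}\leq l$. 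Averaging the increment constraints over $S$ replaces each gap $s=t_{i}-t_{i-1}\leq u$ by $\sum_{|k|\leq Ra_{s}}\mathbf{P}(S_{s}=k)^{2}\geq c_{R}\,\mathbf{P}^{\bigotimes2}(S_{s}=\tilde S_{s})$ with $c_{R}\to1$ as $R\to\infty$, by \eqref{e112}; summing over $\underline t\in J_{l,u}$ and using $q\ll u\ll l$ (so $\sum_{s\leq u}\mathbf{P}^{\bigotimes2}(S_{s}=\tilde S_{s})\asymp D(u)$ and the range of $t_{0}$ is $\asymp l$) yields, using $l/a_{l}=1/\varphi(l)$,
\begin{equation*}
\mu_{x}:=\mathbf{E}^{x}\!\left[\mathbb{E}^{S}[X]\right]\ \gtrsim\ \frac{\lambda'(\beta)}{\sqrt{R\,\varphi(l)}}\,\big(c_{R}^{2}\,\lambda'(\beta)^{2}D(u)\big)^{q/2}.
\end{equation*}
Fixing $R=R(\epsilon)$ so large that, by \eqref{e231} and \eqref{e221}, $c_{R}^{2}\lambda'(\beta)^{2}D(u)\geq 1+\epsilon/2>1$, and using $q\geq\tfrac{1}{2\epsilon^{2}}\log\!\big(\sqrt{\varphi(l)}\,D(l)\big)$ together with $D(l)\geq D(u)\geq(1+\epsilon)\beta^{-2}$, one gets $(1+\epsilon/2)^{q/2}\gtrsim\sqrt{\varphi(l)}\,D(l)$ for $\epsilon$ small, hence $\mu_{x}\gtrsim\lambda'(\beta)D(l)\gtrsim\beta^{-1}\to\infty$ uniformly in $x\in I_{0}$.

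For the second moment, expand $X^{2}$ over pairs of index configurations $(\underline t,\underline x),(\underline t',\underline x')$ and compute $\mathbb{E}^{S}$ by separating the symmetric difference of the two index sets (where each $\omega$ contributes a tilted mean $\lambda'(\beta)$ and forces $S_{t}=x$) from their intersection (where each $\omega^{2}$ contributes $1+(\lambda''(\beta)-1)\mathbbm{1}_{\{S_{t}=x\}}$). The fully-disjoint pairs reassemble $\big(\mathbb{E}^{S}[X]\big)^{2}$, whose $\mathbf{P}^{x}$-average equals $\mu_{x}^{2}$ up to a lower-order correction coming from overlapping time--space points; the fully- and partially-overlapping pairs carry the tilting factors from the $\omega^{2}$'s and must be shown to contribute $o(\mu_{x}^{2})$, by expanding these factors and bounding the resulting time--space sums over $J_{l,u}$ with \eqref{e112}, Theorem \ref{t102} and \eqref{e222}. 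Granting this, $\mathrm{Var}_{\mathbb{Q}^{x}}(X)=o(\mu_{x}^{2})$, and since $\mu_{x}\to\infty$ absorbs the fixed threshold $e^{K^{2}}$, Chebyshev's inequality gives $\mathbb{Q}^{x}(X<e^{K^{2}})\leq\mathbb{Q}^{x}(|X-\mu_{x}|>\mu_{x}/2)\leq 4\,\mathrm{Var}_{\mathbb{Q}^{x}}(X)/\mu_{x}^{2}\to 0$ uniformly in $x\in I_{0}$, completing the proof. The main obstacle is precisely this overlapping second-moment contribution: unlike the two-dimensional walk used in \cite{BL16,BL17}, the Cauchy walk only obeys the one-sided local bound of Theorem \ref{t102} off the bulk, so the time--space sums must be split according to whether $|x_{i}-x_{i-1}|$ is of order $a_{t_{i}-t_{i-1}}$ or much larger, with the slowly varying corrections tracked through Potter's bounds and all estimates kept uniform in $x$ and $m$.
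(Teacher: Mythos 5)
Your overall strategy matches the paper's: express $\mathbf{E}^{x}[\mathbb{E}^{S}[g_{1,0}]]$ in terms of the probability that $X$ exceeds the threshold $e^{K^{2}}$, choose $K$ to absorb $\eta$, and reduce to a first/second-moment estimate for $X$ plus Chebyshev. The difference is organizational. The paper works \emph{conditionally} on the walk: Lemma~\ref{l204} shows that $\mathbb{E}^{S}[X]\geq(1+\epsilon^{2})^{q}$ with $\mathbf{P}^{x}$-probability at least $1-\delta$, Lemma~\ref{l205} shows that $\mathbb{V}\mathrm{ar}^{S}(X)\leq(1+\epsilon^{3})^{q}$ \emph{uniformly in} $S$, and the two are glued with Chebyshev applied to $\mathbb{P}^{S}$ on the good event $A$. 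You instead work under the joint law $\mathbb{Q}^{x}=\mathbf{P}^{x}\otimes\mathbb{P}^{S}$, compute $\mu_{x}=\mathbb{E}_{\mathbb{Q}^{x}}[X]$, and aim to show $\mathrm{Var}_{\mathbb{Q}^{x}}(X)=o(\mu_{x}^{2})$. By the law of total variance, $\mathrm{Var}_{\mathbb{Q}^{x}}(X)=\mathbf{E}^{x}[\mathbb{V}\mathrm{ar}^{S}(X)]+\mathbf{V}\mathrm{ar}^{x}(\mathbb{E}^{S}[X])$, so proving your single variance claim amounts to proving exactly the contents of both Lemmas~\ref{l204} and~\ref{l205}; the paper's conditional split is what makes those estimates tractable, since the bound on $\mathbb{V}\mathrm{ar}^{S}(X)$ can be proved uniformly in $S$ and the bound on $\mathbf{V}\mathrm{ar}^{x}(\mathbb{E}^{S}[X])$ (via $W_{l}$) becomes a separate, cleaner computation. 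Your preliminary steps (unfolding $g_{1,0}$, the first-moment lower bound using $\varphi(l)$, $D(u)$ and the Potter-type control of the spatial restriction) are sound and consistent with the paper's \eqref{e247}--\eqref{e253} and \eqref{e258}.

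The proposal stops short at the decisive step, which you yourself flag: the overlapping second-moment contributions are asserted, not proved, to be $o(\mu_{x}^{2})$. This is precisely the technically heaviest part (the paper's Lemma~\ref{l205} with its Lemmas~\ref{l206}--\ref{l209} covering about half of Section~\ref{s200}), and your unconditional formulation arguably makes it harder, because the two sources of fluctuation (choice of $S$ versus conditional tilting of $\omega$) are entangled rather than treated separately. As written, your argument is a correct reduction to a moment inequality that remains to be established, and that inequality is equivalent in content to the two lemmas the paper proves; I would encourage you to adopt the paper's conditional decomposition, since it allows the variance bound to be stated uniformly in $S$ and cleanly isolates the $\mathbf{P}^{x}$-fluctuations of $\mathbb{E}^{S}[X]$.
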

By \eqref{e221} and Lemma \ref{l203}, if we choose $\eta=(16M)^{-1/\theta}$, then
\begin{equation}\label{e222}
\sum\limits_{|y|\leq M}\max\limits_{x\in I_{0}}\left(\mathbf{E}^{x}\left[\mathbb{E}^{S}[g_{1,0}]\mathbbm{1}_{\{S_{l}\in I_{y}\}}\right]\right)^{\theta}\leq\frac{1}{8}.
\end{equation}
Combining \eqref{e222} and the upper bound for \eqref{e220}, we deduce Proposition \ref{p202}. Therefore, it only remains to prove Lemma \ref{l203}.

Indeed, Lemma \ref{l203} follows from the following two lemmas.
\begin{lemma}\label{l204}
For any $\delta>0$, we can choose a large enough $R$ in \eqref{e205}, which only depends on $\delta$ and the $\epsilon$ in Theorem \ref{t102}, such that for small enough $\beta>0$, and for any $x\in I_{0}$, we have
\begin{equation*}
\mathbf{P}^{x}(\mathbb{E}^{S}[X]\geq(1+\epsilon^{2})^{q})\geq1-\delta.
\end{equation*}
\end{lemma}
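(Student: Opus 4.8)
The plan is to rewrite $\mathbb{E}^{S}[X]$ as a pinned functional of the walk, strip off the in‑band constraint at a fixed cost, bound the remaining functional's $\mathbf{P}^{x}$‑mean from below, and control its $\mathbf{P}^{x}$‑variance, so that Chebyshev's inequality gives the high‑probability statement. Write $p_{m}(k):=\mathbf{P}(S_{m}=k)$ and $f_{m}(y):=p_{m}(y)\mathbbm{1}_{\{|y|\leq Ra_{m}\}}$. Under $\mathbb{P}^{S}$ the field $\omega$ stays independent with $\mathbb{E}^{S}[\omega_{n,x}]=\lambda'(\beta)\mathbbm{1}_{\{S_{n}=x\}}$, and the times $t_{0}<\dots<t_{q}$ in \eqref{e219} are distinct, so pushing $\mathbb{E}^{S}$ into \eqref{e216} pins $x_{i}=S_{t_{i}}$ and gives
\begin{equation*}
\mathbb{E}^{S}[X]=\frac{(\lambda'(\beta))^{q+1}}{\sqrt{2Rla_{l}}\,D(u)^{q/2}}\,V(S),\qquad V(S):=\sum_{\underline{t}\in J_{l,u}}\ \prod_{i=1}^{q}f_{t_{i}-t_{i-1}}(S_{t_{i}}-S_{t_{i-1}})\ \mathbbm{1}_{\{S_{t_{i}}\in\tilde{I}_{0}\ \forall\,0\leq i\leq q\}}.
\end{equation*}
On the event $\{\max_{0\leq n\leq l}|S_{n}|<Ra_{l}\}$ every $S_{t_{i}}$ with $t_{i}\leq l$ lies in $\tilde{I}_{0}$, so $V(S)\geq\mathbbm{1}_{\{\max_{0\leq n\leq l}|S_{n}|<Ra_{l}\}}\,\tilde{V}(S)$ with $\tilde{V}(S):=\sum_{\underline{t}\in J_{l,u}}\prod_{i=1}^{q}f_{t_{i}-t_{i-1}}(S_{t_{i}}-S_{t_{i-1}})$ (no band constraint); since $b_{n}\equiv0$ (from the symmetry of $S$) and $S$ is attracted to a $1$‑stable Lévy process, the observation following \eqref{e213} gives $\mathbf{P}^{x}(\max_{0\leq n\leq l}|S_{n}|\geq Ra_{l})\leq\rho(R)$ with $\rho(R)\downarrow0$, uniformly in $x\in I_{0}$ for $\beta$ small. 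Using $\lambda'(\beta)\geq(1-\epsilon^{3})\beta$, $a_{l}=l\varphi(l)$ and $D(u)\geq(1+\epsilon)\beta^{-2}$ (from \eqref{e231}, \eqref{e102}, \eqref{e221}), on the event $\{\tilde{V}(S)\geq c_{0}(1-\kappa(R))^{q}l\,D(u)^{q}\}$ one gets $\mathbb{E}^{S}[X]\geq\tfrac{c_{0}(1-\epsilon^{3})\beta}{\sqrt{2R\varphi(l)}}\big((1-\kappa(R))(1-\epsilon^{3})\sqrt{1+\epsilon}\big)^{q}$, which exceeds $(1+\epsilon^{2})^{q}$ once $\kappa(R)\leq\epsilon^{2}$ and $\beta$ is small: the base is then $\geq1+\tfrac{\epsilon}{4}>1+\epsilon^{2}$, and $q\geq\epsilon^{-2}\max\{\log\sqrt{\varphi(l)},\log D(l)\}$ with $D(l)\geq D(u)\geq(1+\epsilon)\beta^{-2}$ is, by \eqref{e215} and \eqref{e221}, large enough to absorb the prefactor $\beta/\sqrt{\varphi(l)}$. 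Hence it suffices to produce a fixed $c_{0}>0$ and $\kappa(R)\downarrow0$ with $\mathbf{P}^{x}(\tilde{V}(S)\geq c_{0}(1-\kappa(R))^{q}l\,D(u)^{q})\geq1-\tfrac{\delta}{2}$ for $R$ large and then $\beta$ small, uniformly in $x\in I_{0}$; combined with the bound on the maximum this yields $\mathbf{P}^{x}(V(S)\geq c_{0}(1-\kappa(R))^{q}l\,D(u)^{q})\geq1-\tfrac{\delta}{2}-\rho(R)\geq1-\delta$.

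For the mean, restrict $\tilde{V}(S)$ to $t_{0}\leq l-qu$: then $\tilde{V}(S)\geq\tilde{V}_{0}(S):=\sum_{t_{0}=1}^{l-qu}\Xi_{t_{0}}$, where $\Xi_{t_{0}}:=\sum_{0<\sigma_{1}<\dots<\sigma_{q}\leq qu,\ \sigma_{i}-\sigma_{i-1}\leq u}\prod_{i=1}^{q}f_{\sigma_{i}-\sigma_{i-1}}(S_{t_{0}+\sigma_{i}}-S_{t_{0}+\sigma_{i-1}})$ (with $\sigma_{0}:=0$) depends only on the increments of $S$ on $(t_{0},t_{0}+qu]$. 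As those increments are independent with the increment law of $S$, $\mathbf{E}[\Xi_{t_{0}}]=\big(\sum_{m=1}^{u}\mathbf{E}[f_{m}(S_{m})]\big)^{q}$ with $\mathbf{E}[f_{m}(S_{m})]=\sum_{|k|\leq Ra_{m}}p_{m}(k)^{2}$. By Theorem \ref{t102} together with the Potter bounds, $\sum_{|k|>Ra_{m}}p_{m}(k)^{2}\leq CR^{2\gamma-3}a_{m}^{-1}$ for a fixed $\gamma<1/2$, so (using $\mathbf{P}^{\bigotimes2}(S_{m}=\tilde{S}_{m})\sim Ca_{m}^{-1}$ and $D(u)\to\infty$) one gets $\sum_{m=1}^{u}\mathbf{E}[f_{m}(S_{m})]\geq(1-\kappa(R))D(u)$ with $\kappa(R)\downarrow0$, whence $\mathbf{E}^{x}[\tilde{V}_{0}(S)]=(l-qu)\big(\sum_{m=1}^{u}\mathbf{E}[f_{m}(S_{m})]\big)^{q}\geq\tfrac12\,l\,(1-\kappa(R))^{q}D(u)^{q}$ for $\beta$ small, uniformly in $x\in I_{0}$.

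For the variance, $\Xi_{t_{0}}$ and $\Xi_{t_{0}'}$ are independent whenever $|t_{0}-t_{0}'|\geq qu$ (disjoint increment windows), so $\mathrm{Var}^{x}(\tilde{V}_{0})=\sum_{|t_{0}-t_{0}'|<qu}\mathrm{Cov}(\Xi_{t_{0}},\Xi_{t_{0}'})\leq l\cdot 2qu\cdot\mathbf{E}[\Xi^{2}]$, where $\Xi:=\Xi_{0}$. Granting the estimate $\mathbf{E}[\Xi^{2}]\leq\mathcal{C}^{q}D(u)^{2q}$ for some $\mathcal{C}>0$ not depending on $\beta$, and using that $q=o(\log l)$ (since $\log\varphi(l),\log D(l)=o(\log l)$ by slow variation of $\varphi$ and \eqref{e112}) while $l\to\infty$ faster than any power of $1/\beta$ (as $D^{-1}$ is super‑polynomial, $D$ being slowly growing), the bound $l\cdot 2qu\cdot\mathbf{E}[\Xi^{2}]$ is $o\big((\mathbf{E}^{x}\tilde{V}_{0})^{2}\big)$. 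Chebyshev's inequality applied to $\tilde{V}_{0}$ then gives $\mathbf{P}^{x}(\tilde{V}(S)\geq\tilde{V}_{0}(S)\geq\tfrac14 l\,(1-\kappa(R))^{q}D(u)^{q})\geq1-\tfrac{\delta}{2}$ for $\beta$ small — the required bound, with $c_{0}=\tfrac14$.

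The one remaining input, and the main obstacle, is the block second‑moment estimate $\mathbf{E}[\Xi^{2}]\leq\mathcal{C}^{q}D(u)^{2q}$. Writing $\Xi=\sum_{\underline{\sigma}}\prod_{i}f_{\sigma_{i}-\sigma_{i-1}}(\pi_{\sigma_{i}}-\pi_{\sigma_{i-1}})$ for a fresh walk $\pi=(S_{j})_{j=0}^{qu}$ and using $f_{m}(y)\leq p_{m}(y)$, one expands $\Xi^{2}$ over pairs of index chains $\underline{\sigma},\underline{\sigma}'$ (both starting at $0$), merges their time sets, and evaluates a product of transition kernels of two independent walks tested against $\pi$. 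The diagonal $\underline{\sigma}=\underline{\sigma}'$ contributes at most $\big(\sum_{m\leq u}\mathbf{P}^{\bigotimes2}(S_{m}=\tilde{S}_{m})^{3/2}\big)^{q}=O(1)^{q}$, which is $o(D(u)^{2q})$ because $\sum_{n}a_{n}^{-3/2}<\infty$ while $D(u)\to\infty$. The off‑diagonal terms are where the difficulty lies: after merging, one must bound, summed over the many merged configurations, products of kernels $p_{m}(k)$, and this requires the \emph{uniform} local limit bound of Theorem \ref{t102} — the one place where hypothesis \eqref{e114} is genuinely used — to control $p_{m}(k)$ on every spatial scale rather than only at $|k|\asymp a_{m}$, together with the separation $q\ll u\ll l$ to keep the combinatorial weights under control. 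I expect carrying out this off‑diagonal bound to be by far the most delicate step of the proof.
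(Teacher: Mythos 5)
Your overall architecture matches the paper's proof almost line by line: push $\mathbb{E}^{S}$ into $X$ to get a pinned functional $V(S)$, peel off the band constraint at cost $\rho(R)\downarrow 0$, account for the prefactor $(\lambda'(\beta))^{q+1}/\sqrt{2Rla_l}D(u)^{q/2}$ using $\lambda'(\beta)\sim\beta$ and $\beta^{2}D(u)\geq 1+\epsilon$, decompose the remaining sum into blocks $\Xi_{t_0}$ indexed by the starting time $t_0$ (the paper restricts $t_0\leq l/2$, you restrict $t_0\leq l-qu$, but these play the same role), compute the first moment per block as $\bigl(\sum_{m\leq u}\sum_{|k|\leq Ra_m}p_m(k)^2\bigr)^{q}\geq\bigl((1-\kappa(R))D(u)\bigr)^{q}$, and finally exploit that $\Xi_{t_0}$ depends only on the increments over $(t_0,t_0+qu]$ so that blocks separated by more than $qu$ are independent, leaving only $O(lqu)$ covariance terms for Chebyshev.

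However, there is a genuine gap at the one point where you deviate. You write $\mathrm{Var}^{x}(\tilde V_{0})\leq l\cdot 2qu\cdot\mathbf{E}[\Xi^{2}]$ and then declare the estimate $\mathbf{E}[\Xi^{2}]\leq \mathcal{C}^{q}D(u)^{2q}$ to be the ``main obstacle,'' proposing to expand $\Xi^{2}$ over pairs of index chains, merge their time sets, and control off‑diagonal merged configurations — and you do not carry this out. This elaborate computation is unnecessary: $\Xi$ admits a \emph{deterministic, trajectory‑wise} bound. By Gnedenko's local limit theorem $p_{m}(y)\leq C_{1}/a_{m}$ uniformly, hence
\begin{equation*}
\Xi_{t_0}\;\leq\;\sum_{0<\sigma_1<\cdots<\sigma_q,\ \sigma_i-\sigma_{i-1}\leq u}\ \prod_{i=1}^{q}\frac{C_1}{a_{\sigma_i-\sigma_{i-1}}}\;=\;\Bigl(\sum_{m=1}^{u}\frac{C_1}{a_m}\Bigr)^{q}\;\leq\;(C\,D(u))^{q}
\end{equation*}
for every realization of $S$, since $\sum_{m\leq u}a_m^{-1}\asymp D(u)$ by \eqref{e112}. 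This gives $\mathbf{E}[\Xi^{2}]\leq(CD(u))^{2q}$ at once (indeed $\mathrm{Cov}(\Xi_{t_0},\Xi_{t_0'})\leq(CD(u))^{2q}$ for the $O(lqu)$ overlapping pairs), which is exactly what the paper does in \eqref{e268}. Combined with $u=l^{1-\epsilon^{2}}$ and $q=o(\log l)$ this yields $\mathrm{Var}(W_l)\leq q(C^{2})^{q}l^{-\epsilon^{2}}\to 0$. So the ``most delicate step'' you identify does not exist, and the attendant claim that hypothesis \eqref{e114} is ``genuinely used'' in this spot is also incorrect — the proof of Lemma \ref{l204} uses only the GLLT bound $p_m(y)\leq C_1/a_m$; condition \eqref{e114} enters Proposition \ref{p202} elsewhere, via Theorem \ref{t102}, to control the tail sum over $|y|\geq M$.
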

\begin{lemma}\label{l205}
If $\beta$ is positive and sufficiently small, then for any trajectory $S$ of the underlying random walk, we have
\begin{equation*}
\mathbb{V}\mbox{\rm{ar}}^{S}(X)\leq(1+\epsilon^{3})^{q}.
\end{equation*}
\end{lemma}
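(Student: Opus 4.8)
The plan is to exploit the multilinear structure of $X(\omega)$ together with the fact that, under $\mathbb{P}^{S}$, the environment is still a product field whose law differs from the law under $\mathbb{P}$ only at the sites lying on the trajectory of $S$. Write
\begin{equation*}
X(\omega)=\sum_{\underline{t}\in J_{l,u},\ \underline{x}\in(\tilde{I}_{0})^{q+1}}c_{\underline{t},\underline{x}}\,\omega_{\underline{t},\underline{x}},\qquad c_{\underline{t},\underline{x}}:=\frac{\mathbf{P}(\underline{t},\underline{x})}{\sqrt{2Rla_{l}}\,D(u)^{q/2}}\ge 0,
\end{equation*}
so that every monomial $\omega_{\underline{t},\underline{x}}=\prod_{i=0}^{q}\omega_{t_{i},x_{i}}$ is a product of $q+1$ distinct space--time points, and distinct $(\underline{t},\underline{x})$ give monomials that are orthogonal under $\mathbb{P}$. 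Hence \eqref{e222} already provides the only ``global'' input we will need, namely $\sum_{\underline{t},\underline{x}}c_{\underline{t},\underline{x}}^{2}=\mathbb{E}[X^{2}]\le 1$. Denote by $P:=\{(n,S_{n}):1\le n\le l\}$ the (random, but fixed) trajectory; for a set $A$ of $q+1$ space--time points with strictly increasing times put $M_{A}:=\prod_{(n,x)\in A}\omega_{n,x}$. Since the $\omega_{n,x}$ are independent under $\mathbb{P}^{S}$ with $\mathbb{E}^{S}[\omega_{n,x}]=\lambda'(\beta)\mathbbm{1}_{\{(n,x)\in P\}}$ and $\mathbb{E}^{S}[\omega_{n,x}^{2}]=\lambda''(\beta)+\lambda'(\beta)^{2}$ when $(n,x)\in P$ and $=1$ otherwise,
\begin{equation*}
\mathbb{E}^{S}[M_{A}M_{A'}]=\prod_{(n,x)\in A\cap A'}\mathbb{E}^{S}[\omega_{n,x}^{2}]\prod_{(n,x)\in A\triangle A'}\mathbb{E}^{S}[\omega_{n,x}],
\end{equation*}
which vanishes unless $A\triangle A'\subseteq P$, and in that case $A\subseteq P\Leftrightarrow A\cap A'\subseteq P\Leftrightarrow A'\subseteq P$. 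Expanding the variance as $\mathbb{V}\mbox{ar}^{S}(X)=\sum_{A,A'}c_{A}c_{A'}\bigl(\mathbb{E}^{S}[M_{A}M_{A'}]-\mathbb{E}^{S}[M_{A}]\mathbb{E}^{S}[M_{A'}]\bigr)$ (a sum of non-negative terms, as each bracket is a difference of the form $\prod\nu-\prod\mu^{2}\ge 0$), I would split according to $r:=q+1-|A\cap A'|$ and according to whether $A,A'\subseteq P$.

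The diagonal ($r=0$) is easy: whether or not $A\subseteq P$, the corresponding bracket is at most $\nu^{\,q+1}$ with $\nu:=\lambda''(\beta)+\lambda'(\beta)^{2}$, so this part is $\le\nu^{\,q+1}\sum_{A}c_{A}^{2}\le\nu^{\,q+1}$. By \eqref{e231} and $\lambda(\beta)=\tfrac12\beta^{2}+O(\beta^{3})$ near $0$, one has $\nu\to 1$ as $\beta\to 0$; since $q=q(\beta)\to\infty$ as $\beta\to 0$ by \eqref{e221}, we get $\tfrac{q+1}{q}\log\nu\to 0<\log(1+\epsilon^{3})$, hence $\nu^{\,q+1}\le(1+\epsilon^{3})^{q}$ for all small $\beta$, with room to spare. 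The same remark lets us assume from now on that every ``on-trajectory'' moment factor is within $\epsilon^{4}$ of its untilted value, i.e.\ $\lambda''(\beta)+\lambda'(\beta)^{2}\le 1+\epsilon^{4}$ and $0<\lambda'(\beta)\le(1+\epsilon^{4})\beta$; this extra slack will absorb the non-diagonal contributions.

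It remains to control the off-diagonal terms, $r\ge 1$. For the pairs with neither $A$ nor $A'$ contained in $P$ one bounds $\prod_{A\cap A'}\mathbb{E}^{S}[\omega^{2}]\le(1+\epsilon^{4})^{q+1-r}$ and $\prod_{A\triangle A'}\mathbb{E}^{S}[\omega]=\lambda'(\beta)^{2r}\le((1+\epsilon^{4})\beta)^{2r}$, and is left with a pure random-walk sum of $c_{A}c_{A'}$ over configurations whose symmetric difference consists of $2r$ points forced onto the trajectory. The crucial point is renewal-theoretic: inserting an on-trajectory point $(n,S_{n})$ between two consecutive points of a configuration replaces one transition factor $\mathbf{P}(S_{s'-s}=\cdot)$ by a product $\mathbf{P}(S_{n-s}=\cdot)\,\mathbf{P}(S_{s'-n}=\cdot)$ to be summed over $n\in(s,s')$ with $s'-s\le u$, and by Gnedenko's local limit estimate \eqref{e112} such a summation costs a factor $O(D(u))$, exactly compensated by one power of $D(u)^{-1}$ from the normalization, leaving a net cost $O(\beta^{2})$ per inserted point (up to a harmless combinatorial factor for choosing the inserted times, since $q\beta^{2}\to 0$ and $\beta^{2}D(u)\le 1+2\epsilon$ by \eqref{e221}). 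Summing the geometric series over $r\ge 1$ bounds this sector by $o(1)$. For the remaining pairs, with $A,A'\subseteq P$ but $A\ne A'$, one cannot simply drop the subtracted term $\mathbb{E}^{S}[M_{A}]\mathbb{E}^{S}[M_{A'}]$ --- since $(\mathbb{E}^{S}[X])^{2}\ge(1+\epsilon^{2})^{2q}$ by Lemma \ref{l204}, all of that fully-on-trajectory mass must cancel --- so here I would use $\nu^{q+1-r}-\lambda'(\beta)^{2(q+1-r)}=\lambda''(\beta)\sum_{j=0}^{q-r}\nu^{q-r-j}\lambda'(\beta)^{2j}\le(q+1)(1+\epsilon^{4})^{q+1-r}$, and then show, again via \eqref{e112} and the definition of $D(u)$, that the probability cost of forcing two independent ``skeletons'' to share $q+1-r$ space--time points is exponentially small in $q+1-r$, precisely small enough to beat the $\lambda'(\beta)^{-2(q+1-r)}$ hidden in the normalization. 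Collecting the three sectors and using the $\epsilon^{3}$-versus-$\epsilon^{4}$ slack yields $\mathbb{V}\mbox{ar}^{S}(X)\le(1+\epsilon^{3})^{q}$ for $\beta$ small.

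The main obstacle is precisely this last sector: carrying out the cancellation of $(\mathbb{E}^{S}[X])^{2}$ inside the fully-on-trajectory sum and showing the remainder is negligible. This is the Cauchy-walk analogue of the second-moment computation of \cite{BL17} (and of the estimates in \cite{BL16}), but it is genuinely more delicate here: the scaling sequence is $a_{n}=n\varphi(n)$ with $\varphi$ slowly varying rather than $a_{n}=n$, so the local limit theorem \eqref{e112} and the Potter and Theorem \ref{t102} bounds must be applied uniformly over all time gaps $\le u$, and the monotonicity of $\varphi$ assumed in Theorem \ref{t103}(i) is what keeps the resulting renewal sums comparable to $D(u)$. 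The conditions $q\ll u\ll l$ and $1+\epsilon\le\beta^{2}D(u)\le 1+2\epsilon$ from \eqref{e221} are exactly what make every one of these sums converge with the right constant.
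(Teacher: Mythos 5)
Your decomposition of $\mathbb{V}\mathrm{ar}^{S}(X)$ directly in terms of $\omega$-monomials and the covariance $\mathbb{E}^{S}[M_{A}M_{A'}]-\mathbb{E}^{S}[M_{A}]\mathbb{E}^{S}[M_{A'}]$ is a genuinely different organization from what the paper does: the paper first shifts the environment to $\hat{\omega}_{n,x}:=\omega_{n,x}-\lambda'(\beta)\mathbbm{1}_{\{S_{n}=x\}}$, which is centered under $\mathbb{P}^{S}$, and expands each copy of $X$ over subsets $A\subseteq\{0,\dots,q\}$ of indices at which $\omega$ is replaced by its tilted mean. In that expansion the term $A=A'=\{0,\dots,q\}$ is \emph{exactly} $(\mathbb{E}^{S}[X])^{2}$, so subtracting it is automatic, every surviving term has at least one $\hat{\omega}$ in each copy, orthogonality then forces the $\hat{\omega}$-indices to coincide between the two copies, and one is left with a purely nonnegative renewal-type sum indexed by the common skeleton $\underline{s}$ (\eqref{e277}--\eqref{e278}). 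There is no cancellation left to track, and the entire remaining work is the quantitative control of those sums (Lemmas~\ref{l206}--\ref{l209} and the Potter/LLT bounds).

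This is precisely the point on which your write-up has a gap. In your unshifted decomposition, the sector with both $A,A'\subseteq P$ and $A\neq A'$ is where the mass of $(\mathbb{E}^{S}[X])^{2}$ lives, and you correctly note one cannot drop the subtracted term there. But the telescoping bound $\nu^{q+1-r}-\lambda'(\beta)^{2(q+1-r)}\leq (q+1)(1+\epsilon^{4})^{q+1-r}$ does not actually exploit the cancellation at all (for $\beta$ small the subtracted term is negligible compared to $\nu^{q+1-r}\approx 1$), so this bound is morally equivalent to ignoring the subtraction, which you already argued is hopeless. Your intended rescue --- that forcing two fully-on-trajectory skeletons to share $q+1-r$ space-time points costs a factor exponentially small in $q+1-r$ and specifically of order $\lambda'(\beta)^{2(q+1-r)}$ --- is asserted, not proved, and it is not clear that such a decay rate holds; the combinatorics of the shared and inserted times, the constraint $t_{i}-t_{i-1}\leq u$, and the $\varphi$-dependent LLT/Potter bounds all enter, and nothing in the sketch isolates a factor of that form. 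The paper side-steps this entirely by the $\hat{\omega}$ shift. Similarly, in the ``neither in $P$'' sector the claim that each insertion ``costs $O(D(u))$, exactly compensated'' is too coarse: a naive balancing gives $O(1)^{r}$, which does not sum; the paper's bound $C^{q}(qu)^{1+\epsilon^{4}}l\,D(u)^{q+r}$ for \eqref{e278} only becomes small after dividing by $2Rla_{l}D(u)^{q}$, i.e.\ the smallness comes from $u^{1+\epsilon^{4}}/a_{l}\to 0$ together with $q\ll u\ll l$, not from any per-insertion gain. So while the framing and the diagonal bound are sound, the two off-diagonal sectors --- which you acknowledge are the heart of the matter --- are left at the level of a plan, and the plan for the $A,A'\subseteq P$ sector has a concrete flaw. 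I would recommend adopting the paper's $\hat{\omega}$-shift, after which the only remaining work is the renewal estimates (Lemmas~\ref{l206}--\ref{l209}), which are substantial but do not require tracking any cancellation.
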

We postpone the proof of Lemma \ref{l204} and Lemma \ref{l205}, and deduce Lemma \ref{l203} first.
\begin{proof}[Proof of Lemma \ref{l203}]
By the definition of $g_{1,0}$, for any trajectory $S$, we have the following trivial bound
\begin{equation}\label{e223}
\mathbb{E}^{S}[g_{1,0}]\leq\exp(-K)+\mathbb{P}^{S}(X(\omega)\leq\exp(K^{2})).
\end{equation}
By Chebyshev's inequality,
\begin{equation}\label{e224}
\mathbb{P}^{S}(X(\omega)\leq\exp(K^{2}))\leq(\exp(K^{2})-\mathbb{E}^{S}[X])^{-2}\mathbb{V}\mbox{ar}^{S}(X).
\end{equation}
We denote $A=\{\mathbb{E}^{S}[X]\geq(1+\epsilon^{2})^{q}\}$. For any $x\in I_{0}$, by \eqref{e224}, Lemma \ref{l204} and Lemma \ref{l205}, we then have
\begin{equation}\label{e225}
\begin{split}
&\mathbf{E}^{x}\left[\mathbb{P}^{S}(X(\omega)\leq\exp(K^{2}))\right]\\
\leq&\mathbf{P}^{x}(A^{c})+\mathbf{E}^{x}\left[\mathbb{P}^{S}(X(\omega)\leq\exp(K^{2}))\mathbbm{1}_{A}\right]\\
\leq&\delta+\frac{(1+\epsilon^{3})^{q}}{2(1+\epsilon^{2})^{2q}},
\end{split}
\end{equation}
where we use the fact that $(1+\epsilon^{2})^{q}-\exp(K^{2})\geq\sqrt{2}(1+\epsilon^{2})^{q}$ to obtain the last line, since $q$ can be made arbitrarily large by choosing $\beta$ close enough to $0$.

Now we first take $\mathbf{E}^{x}$-expectation on the both sides of \eqref{e223}. Then, we choose $K$ large enough such that $\exp(-K)<\eta/3$. Next, we let $\beta$ tend to $0$ so that the last line of \eqref{e225} is smaller than $2\eta/3$, which implies Lemma \ref{l203}.
\end{proof}
The proofs of Lemma \ref{l204} and Lemma \ref{l205} involve some long and tedious computations. Hence, we put each proof in one subsection to make the structure more clear and we will write some intermediate steps as lemmas to clarify the proofs.
\subsection{Proof of Lemma \ref{l204}}In this subsection, we prove Lemma \ref{l204}.
\begin{proof}[Proof of Lemma \ref{l204}]
First, we recall the definition \eqref{e208} of $X$. Note that $\omega$ is a family of independent random variables under $\mathbb{P}^{S}$, and by \eqref{e215}, $\mathbb{E}^{S}[\omega_{n,x}]=0$ if $S_{n}\neq x$. Hence, for any trajectory of $S$, we have
\begin{equation}\label{e226}
\begin{split}
\mathbb{E}^{S}[X]=&\frac{(\lambda'(\beta))^{q+1}}{\sqrt{2Rla_{l}}D(u)^{q/2}}\sum\limits_{\underline{t}\in J_{l,u}}\mathbf{P}(\underline{t},\underline{S}^{(\underline{t})})\mathbbm{1}_{\{S_{t_{k}}\in\tilde{I}_{0},\forall k\in\{0,\cdots,q\}\}}\\
\geq&\frac{(\lambda'(\beta))^{q+1}}{\sqrt{2Rla_{l}}D(u)^{q/2}}\sum\limits_{\underline{t}\in J_{l,u}}\mathbf{P}(\underline{t},\underline{S}^{(\underline{t})})\mathbbm{1}_{\left\{\max\limits_{1\leq t\leq l}|S_{t}|\leq Ra_{l}\right\}},
\end{split}
\end{equation}
where
\begin{equation}\label{e227}
\underline{S}^{(\underline{t})}:=(S_{t_{0}},\cdots,S_{t_{q}})
\end{equation}
and we will use notation \eqref{e227} in what follows. We emphasize that in \eqref{e226}, the trajectory $\underline{S}^{(\underline{t})}$ should be substituted into the $\underline{x}$ in \eqref{e209} and readers should not mix it up with the random walk $S$ in \eqref{e209}.

Note that for any $x\in I_{0}=(-a_{l}/2, a_{l}/2]$,
\begin{equation}\label{e228}
\mathbf{P}^{x}\left(\max\limits_{1\leq t\leq l}|S_{t}|>Ra_{l}\right)\leq\mathbf{P}\left(\max\limits_{1\leq t\leq l}|S_{t}|>(R-1)a_{l}\right).
\end{equation}
Since $S$ is attracted to some $1$-stable L\'{e}vy process, for any $\delta>0$, we can choose $R=R(\delta,\epsilon)$ large enough such that uniformly in $l$, the probability in \eqref{e228} is smaller than $\delta/2$. In what follows, we will simply write $R$ for $R(\delta,\epsilon)$.

On the event $\{\max_{1\leq t\leq l}|S_{t}|\leq Ra_{l}\}$, by \eqref{e216}, \eqref{e206} and \eqref{e207}, we have
\begin{equation}\label{e229}
\begin{split}
\mathbb{E}^{S}[X]&\geq\frac{\beta}{\sqrt{2R\varphi(l)}}(1-\epsilon^{3})^{q+1}(\beta^{2}D(u))^{q/2}\frac{1}{lD(u)^{q}}\sum\limits_{\underline{t}\in J_{l,u}}\mathbf{P}(\underline{t},\underline{S}^{(\underline{t})})\\
&\geq\frac{\beta}{\sqrt{2R}}\frac{(1-\epsilon^{3})^{q+1}(1+\epsilon)^{q/2}}{\exp(\epsilon^{2}q)}\frac{1}{lD(u)^{q}}\sum\limits_{\underline{t}\in J_{l,u}}\mathbf{P}(\underline{t},\underline{S}^{(\underline{t})}).
\end{split}
\end{equation}
Note that for $\epsilon$ small enough, by \eqref{e207},
\begin{equation*}
\begin{split}
&\beta\frac{(1-\epsilon^{3})^{q+1}(1+\epsilon)^{q/2}}{(1+\epsilon^{2})^{2q}\exp(\epsilon^{2}q)}\geq\beta\left(1+\frac{\epsilon}{20}\right)^{q}\\
\geq&\beta\left(1+\frac{\epsilon}{20}\right)^{\frac{1}{\epsilon^{2}}\log{D(l)}}\gg\beta\exp(\log{D(u)})\geq\frac{1+\epsilon}{\beta}\gg1.
\end{split}
\end{equation*}
Hence,
\begin{equation*}
\frac{\beta}{\sqrt{2R}}\frac{(1-\epsilon^{3})^{q+1}(1+\epsilon)^{q/2}}{\exp(\epsilon^{2}q)}\geq(1+\epsilon^{2})^{2q}
\end{equation*}
and \eqref{e229} implies that
\begin{equation}\label{e230}
\mathbb{E}^{S}[X]\geq(1+\epsilon^{2})^{2q}\frac{1}{lD(u)^{q}}\sum\limits_{t\in J_{l,u}}\mathbf{P}(\underline{t},\underline{S}^{(\underline{t})}).
\end{equation}
Recall that the probability in \eqref{e228} is smaller than $\delta/2$ and by \eqref{e230} on $\{\max_{1\leq t\leq l}|S_{t}|\leq Ra_{l}\}$, we have
\begin{equation}\label{e231}
\mathbf{P}^{x}\left(\mathbb{E}^{S}[X]<(1+\epsilon^{2})^{q}\right)\leq\frac{\delta}{2}+\mathbf{P}^{x}\left(\frac{1}{lD(u)^{q}}\sum\limits_{t\in J_{l,u}}\mathbf{P}(\underline{t},\underline{S}^{(\underline{t})})<\frac{1}{(1+\epsilon^{2})^{q}}\right).
\end{equation}
To bound the probability on the right-hand side of \eqref{e231}, we introduce a random variable
\begin{equation*}
W_{l}=W_{l}(S):=\frac{1}{lD(u)^{q}}\sum\limits_{\underline{t}\in J'_{l,u}}\mathbf{P}(\underline{t},\underline{S}^{(\underline{t})}),
\end{equation*}
where
\begin{equation*}
J'_{l,u}=\{\underline{t}\in J_{l,u}: 1\leq t_{0}\leq l/2\}.
\end{equation*}
Since $J'_{l,u}\subset J_{l,u}$, it suffices to prove
\begin{equation}\label{e232}
\mathbf{P}^{x}\left(W_{l}<\frac{1}{(1+\epsilon^{2})^{q}}\right)\leq\frac{\delta}{2}.
\end{equation}
Note that by the definition of $\mathbf{P}(\underline{t},\underline{S}^{(\underline{t})})$, the law of $W_{l}$ does not depend on the starting point $S_{0}=x$. Hence, during the rest of the proof, we can simply use $\mathbf{P}$ instead of $\mathbf{P}^{x}$ for short. Our strategy to prove \eqref{e232} is to show that the mean of $W_{l}$ is $1/2$ and the variance of $W_{l}$ can be controlled.

First, by recalling the definition of $l,u,$ and $q$, when $\beta$ is small enough, $l/2+qu<l$. Since the value of $\mathbf{P}(\underline{t},\underline{S}^{(\underline{t})})$ does not depend on $S_{t_{0}}$, we have
\begin{equation}\label{e233}
\begin{split}
\mathbf{E}\left[\sum\limits_{\{t\in J'_{l,u}\}}\mathbf{P}(\underline{t},\underline{S}^{(\underline{t})})\right]=\frac{l}{2}\sum\limits_{\{t\in J'_{l,u},t_{0}=1\}}\mathbf{E}\left[\mathbf{P}(\underline{t},\underline{S}^{(\underline{t})})\right]=\frac{l}{2}\left(\sum\limits_{t=1}^{u}\sum\limits_{x\in\mathbb{Z}}\mathbf{P}(S_{t}=x)^{2}\right)^{q}=\frac{l}{2}D(u)^{q}.
\end{split}
\end{equation}
Therefore, $\mathbf{E}[W_{l}]=1/2$. By Chebyshev's inequality, we have:
\begin{equation}\label{e234}
\mathbf{P}\left(W_{l}-\mathbf{E}[W_{l}]<\frac{1}{(1+\epsilon^{2})^{q}}-\mathbf{E}[W_{l}]\right)\leq 4\mathbf{V}\mbox{ar}(W_{l}),
\end{equation}

It remains to control the variance of $W_{l}$. We define
\begin{equation*}
Y_{j}=\frac{1}{D(u)^{q}}\sum\limits_{\underline{t}\in J'_{l,u}(j)}\mathbf{P}(\underline{t},\underline{S}^{(\underline{t})})-1,
\end{equation*}
where $J'_{l,u}(j)=\{\underline{t}\in J'_{l,u}: t_{0}=j\}$. It is obvious that $W_{l}-\mathbf{E}[W_{l}]=\left(\sum_{j=1}^{l/2}Y_{j}\right)/l$ and $\mathbf{E}[Y_{j}]=0$ by \eqref{e233}. Then we have
\begin{equation}\label{e235}
\mathbf{V}\mbox{ar}(W_{l})=\frac{1}{l^{2}}\sum\limits_{j_{1},j_{2}=1}^{\frac{l}{2}}\mathbf{E}[Y_{j_{1}}Y_{j_{2}}].
\end{equation}
By Gnedenko's local limit theorem (\textit{cf.} \cite[Theorem 8.4.1]{BGT89}), there exists a constant $C_{1}$, such that for any $t>0$ and $x\in\mathbb{Z}$,
\begin{equation}\label{e236}
\mathbf{P}(S_{t}=x)\leq\frac{C}{a_{t}}.
\end{equation}
Hence, by \eqref{e236}, \eqref{e112} and \eqref{e110},
\begin{equation}\label{e237}
Y_{j}\leq\frac{1}{D(u)^{q}}\sum\limits_{\underline{t}\in J'_{l,u}(j)}\mathbf{P}(\underline{t},\underline{S}^{(\underline{t})})\leq\frac{1}{D(u)^{q}}\left(\sum\limits_{t=1}^{u}\frac{C_{1}}{a_{t}}\right)^{q}\leq(C_{2})^{q}.
\end{equation}
Next, we will show that most summands in \eqref{e235} are zero. Note that for $j\in\{1,\cdots, l/2\}$, $t_{q}-t_{0}\leq qu$ for $t_{0},t_{q}\in J'_{l,u}(j)$. If we denote the increment of $S$ by $(Z_{n})_{n\geq1}$, then $Y_{j}$ only depends on $(Z_{j+1},\cdots,Z_{j+qu})$. Therefore, for $|j_{1}-j_{2}|>qu$, $Y_{j_{1}}$ and $Y_{j_{2}}$ are independent and $\mathbf{E}[Y_{j_{1}}Y_{j_{2}}]=\mathbf{E}[Y_{j_{1}}]\mathbf{E}[Y_{j_{2}}]=0$. By \eqref{e237},
\begin{equation*}
\mathbf{V}\mbox{ar}(W_{l})\leq\frac{qu}{l}(C_{2})^{2q}\leq q(C_{2})^{2q}l^{-\epsilon^{2}}.
\end{equation*}
Then \eqref{e234} is bounded above by $(C_{3})^{q}l^{-\epsilon^{2}}$, which tends to $0$ as $\beta$ tends to $0$ by the definition of $q$ and $l$ and we complete the proof of Lemma \ref{l204}.
\end{proof}
\subsection{Proof of Lemma \ref{l205}}In this subsection, we prove Lemma \ref{l205}. We will use $C$ to represent generic constants in the proof and it could change from line to line.
\begin{proof}[Proof of Lemma \ref{l205}]
For any trajectory of $S$, we shift the environment by
\begin{equation}\label{e238}
\hat{\omega}_{n,x}:=\omega_{n,x}-\lambda'(\beta)\mathbbm{1}_{\{S_{n}=x\}}.
\end{equation}
It is not hard to check that under $\mathbb{P}^{S}$, $\hat{\omega}$ is a family of independent random variables with mean $0$. Besides, when $\beta$ is small enough, by \eqref{e215} and \eqref{e216}, the variance of $\hat{\omega}_{n,x}$ can be bounded by $1+(\epsilon^{3}/2)$.

To bound $\mathbb{V}\mbox{ar}^{S}(X)$, we start by observing that
\begin{equation}\label{e239}
\mathbb{E}^{S}[X^{2}]=\frac{1}{2Rla_{l}D(u)^{q}}\mathbb{E}^{S}\left[\left(\sum\limits_{\underline{x}\in(\tilde{I}_{0})^{q+1},\underline{t}\in J_{l,u}}\mathbf{P}(\underline{t},\underline{x})\prod\limits_{i=0}^{q}\left(\hat{\omega}_{t_{j},x_{j}}+\lambda'(\beta)\mathbbm{1}_{\{S_{t_{j}}=x_{j}\}}\right)\right)^{2}\right].
\end{equation}
A simple expansion shows that
\begin{equation*}
\prod\limits_{i=0}^{q}\left(\hat{\omega}_{t_{j},x_{j}}+\lambda'(\beta)\mathbbm{1}_{\{S_{t_{j}}=x_{j}\}}\right)=\sum\limits_{r=0}^{q+1}(\lambda'(\beta))^{r}\sum\limits_{A\subset\{0,\cdots,q\}\atop|A|=r}\prod\limits_{k\in A}\mathbbm{1}_{\{S_{t_{k}}=x_{k}\}}\prod\limits_{j\in\{0,\cdots,q\}\backslash A}\hat{\omega}_{t_{j},x_{j}}.
\end{equation*}
Therefore, the square term in $\mathbb{E}^{S}$ in \eqref{e239} is the summation over $\underline{x},\underline{x}'\in(\tilde{I}_{0})^{q+1}, \underline{t},\underline{t}'\in J_{l,u}$ of $\mathbf{P}(\underline{t},\underline{x})\mathbf{P}(\underline{t}',\underline{x}')$ times
\begin{equation}\label{e240}
\sum\limits_{r=0}^{q+1}\sum\limits_{r'=0}^{q+1}(\lambda'(\beta))^{r+r'}\sum\limits_{A\subset\{0,\cdots,q\},|A|=r\atop B\subset\{0,\cdots,q\},|B|=r}\prod\limits_{k\in A\atop k'\in B}\mathbbm{1}_{\{S_{t_{k}}=x_{k}\}}\mathbbm{1}_{\{S_{t'_{k'}}=x'_{k'}\}}\prod\limits_{j\in\{0,\cdots,q\}\backslash A\atop j'\in\{0,\cdots,q\}\backslash B}\hat{\omega}_{t_{j},x_{j}}\hat{\omega}_{t'_{j'},x'_{j'}}.
\end{equation}
Note that $\hat{\omega}$ is a family of independent and mean-zero random variables under $\mathbb{P}^S$. When taking $\mathbb{P}^{S}$-expectation in \eqref{e240}, the summand is nonzero if and only if $r=r'$ and
\begin{equation*}
\{(t_{j},x_{j})|~j\in\{0,\cdots,q\}\backslash A\}=\{(t'_{j'},x'_{j'})|~j\in\{0,\cdots,q\}\backslash B\}.
\end{equation*}
Hence, to compute the $\mathbb{P}^{S}$-expectation of \eqref{e240}, we can first fix $(t_{j},x_{j})$ for $j\in\{0, \cdots, q\}\backslash A$, and then define a set of $(q-r+1)$-tuples:
\begin{equation*}
\mathcal{S}_{q-r}:=\{\underline{s}:=(s_{0},\cdots,s_{q-r}): 1\leq s_{0}<\cdots<s_{q-r}\leq l, s_{q-r}-s_{0}\leq qu\}.
\end{equation*}
For any given $\underline{s}\in\mathcal{S}_{q-r}$, we further define a related set of $r$-tuples:
\begin{equation*}
\mathcal{T}_{r}(\underline{s}):=\{\underline{t}=(t_{1},\cdots,t_{r}):~1\leq t_{1}<\cdots<t_{r}\leq l, \underline{s}\cdot\underline{t}\in J_{l,u}\},
\end{equation*}
where $\underline{s}\cdot\underline{t}$ is a $(q+1)$-tuple, which contains all the entries of $\underline{s}$ and $\underline{t}$ and the entries are ordered from the smallest to the largest.

Now we can have a nicer form for $\mathbb{V}$ar$^{S}(X)$. Note that the $\mathbb{P}^{S}$-expectation of the term $r=r'=q+1$ in \eqref{e240} is exactly the term $\mathbb{E}^{S}[X]^{2}$, so we can subtract it on both sides of \eqref{e239} and by recalling $\mathbb{E}^{S}[(\hat{\omega}_{n,x})^{2}]\leq(1+\epsilon^{3}/2)\leq2$ from \eqref{e238}, we obtain
\begin{equation}\label{e241}
\begin{split}
\mathbb{V}\mbox{ar}^{S}(X)\leq&\frac{(1+\epsilon^{3}/2)^{q+1}}{2Rla_{l}D(u)^{q}}\sum\limits_{\underline{x}\in(\tilde{I}_{0})^{q+1},\underline{t}\in J_{l,u}}\mathbf{P}(\underline{t},\underline{x})^{2}\\
+&\frac{1}{2Rla_{l}D(u)^{q}}\sum\limits_{r=1}^{q}(\lambda'(\beta))^{2r}2^{q+1-r}\\
&\sum\limits_{\underline{s}\in\mathcal{S}_{q-r}}\sum\limits_{\underline{x}\in(\tilde{I}_{0})^{q+1-r}}
\sum\limits_{\underline{t},\underline{t}'\in\mathcal{T}_{r}(\underline{s})}\mathbf{P}((\underline{s}\cdot\underline{t}),(\underline{x},\underline{S}^{(\underline{t})}))\mathbf{P}((\underline{s}\cdot\underline{t}'),(\underline{x},\underline{S}^{(\underline{t}')})),
\end{split}
\end{equation}
where the first term on the right-hand side of \eqref{e241} corresponds to $r=0$, and it is actually equal to $(1+\epsilon^{3}/2)^{q+1}\mathbb{E}[X^{2}]$ and bounded above by $(1+\epsilon^{3}/2)^{q+1}$. For the $(q+1)$-tuple $(\underline{x},\underline{S}^{(\underline{t})})$ in the last summation, its $i$-th element is $x_{j}$ if and only if the $i$-th element in $\underline{s}\cdot\underline{t}$ is $s_{j}$, while it is $S_{t_{j}}$ if and only if the $i$-th element in $\underline{s}\cdot\underline{t}$ is $t_{j}$.

Finally, we will bound
\begin{equation}\label{e242}
\begin{split}
&\sum\limits_{\underline{s}\in\mathcal{S}_{q-r}}\sum\limits_{\underline{x}\in(\tilde{I}_{0})^{q+1-r}}
\sum\limits_{\underline{t},\underline{t}'\in\mathcal{T}_{r}(\underline{s})}\mathbf{P}((\underline{s}\cdot\underline{t}),(\underline{x},\underline{S}^{(\underline{t})}))\mathbf{P}((\underline{s}\cdot\underline{t}'),(\underline{x},\underline{S}^{(\underline{t}')}))\\
=&\sum\limits_{\underline{s}\in\mathcal{S}_{q-r}}\sum\limits_{\underline{x}\in(\tilde{I}_{0})^{q+1-r}}\left(\sum\limits_{\underline{t}\in\mathcal{T}_{r}(\underline{s})}\mathbf{P}((\underline{s}\cdot\underline{t}),(\underline{x},\underline{S}^{(\underline{t})}))\right)^{2},
\end{split}
\end{equation}
which is the most complicated part of the proof.

First, let us denote $s_{-1}:=0$ and $s_{q-r+1}:=l$. We can split the summation $\sum_{\underline{t}\in\mathcal{T}_{r}(\underline{s})}\mathbf{P}((\underline{s}\cdot\underline{t}),(\underline{x},\underline{S}^{(\underline{t})}))$ according to the position of $t_{1}$. We have
\begin{equation*}
\sum\limits_{\underline{t}\in\mathcal{T}_{r}(\underline{s})}\mathbf{P}((\underline{s}\cdot\underline{t}),(\underline{x},\underline{S}^{(\underline{t})}))=\sum\limits_{k=0}^{q+1-r}\sum\limits_{\underline{t}\in\mathcal{T}_{r}(\underline{s}),t_{1}\in(s_{k-1},s_{k})}\mathbf{P}((\underline{s}\cdot\underline{t}),(\underline{x},\underline{S}^{(\underline{t})})).
\end{equation*}
We observe that
\begin{equation}\label{e243}
\begin{split}
&\sum\limits_{\underline{t}\in\mathcal{T}_{r}(\underline{s}),t_{1}\in(s_{k-1},s_{k})}\mathbf{P}((\underline{s}\cdot\underline{t}),(\underline{x},\underline{S}^{(\underline{t})}))\leq\sum\limits_{0= m_{0}=\cdots=m_{k-1}<\atop m_{k}\leq m_{k+1}\leq\cdots\leq m_{q-r}\leq r}\mathbbm{1}_{\underline{t}\in\mathcal{T}_{r}(\underline{s})}\\
\prod\limits_{i=1}^{q-r}&\sum\limits_{s_{i-1}<t_{m_{i-1}+1}<\cdots<t_{m_{i}}<s_{i}}\mathbf{P}((s_{i-1},t_{m_{i-1}+1},\cdots,t_{m_{i}},s_{i}),(x_{i-1},S_{t_{m_{i-1}+1}},\cdots,S_{t_{m_{i}}},x_{i}))\\
&\times\sum\limits_{0<t_{1}<\cdots<t_{m_{0}}<s_{0}}\mathbf{P}((t_{1},\cdots,t_{m_{0}},s_{0}),(S_{t_{1}},\cdots,S_{t_{m_{0}}},x_{0}))\\
&\times\sum\limits_{s_{q-r}<t_{m_{q-r}+1}<\cdots<t_{r}<l}\mathbf{P}((s_{q-r},t_{m_{q-r}+1},\cdots,t_{r}),(x_{q-r},S_{t_{m_{q-r}+1}},\cdots,S_{t_{r}})).
\end{split}
\end{equation}
Here $m_{i}$ denotes the number of $t$-indices before $s_{i}$. If $m_{0}=0$, then the third line of \eqref{e243} is simply $1$ and so is the fourth line of \eqref{e243} if $m_{q-r}=r$. Note that 

We can bound the factor in the second line of \eqref{e243} for any $i\in\{1,\cdots,q-r\}$ according to the following lemma.
\begin{lemma}\label{l206}
There exists a constant $C$, such that for any $j\in\mathbb{N}$ and any $(z_{i})_{i=1}^{j}\in\mathbb{Z}^{j}$,
\begin{equation}\label{e244}
\sum\limits_{0<t_{1}<\cdots<t_{j}<s\atop|t_{i}-t_{i-1}|\leq u, i=1,\cdots,j}\mathbf{P}((0,t_{1},\cdots,t_{j},s),(0,z_{1},\cdots,z_{j},x))\leq(CD(u))^{j}p_{s}(0,x),
\end{equation}
where $t_{0}:=0$ for convention and we use the notation
\begin{equation*}
p_{t}(x,y)=\mathbf{P}(S_{t}=y-x)
\end{equation*}
for any $t\geq1$ and $y,x\in\mathbb{Z}$.
\end{lemma}
\begin{proof}[Proof of Lemma \ref{l206}]
Recall the definition \eqref{e209} for $\mathbf{P}(\underline{t},\underline{x})$ and note that the product of the first two factors of $\mathbf{P}((0,t_{1},\cdots,t_{j},s),(0,z_{1},\cdots,z_{j},x))$ is
\begin{equation}\label{e245}
\mathbf{P}(S_{t_{1}}=z_{1})\mathbf{P}(S_{t_{2}}-S_{t_{1}}=z_{2}-z_{1}).
\end{equation}
We now show an upper bound for \eqref{e245} when it is non-zero. By Gnedenko's local limit theorem (\textit{cf.} \cite[Theorem 8.4.1]{BGT89}), there exists a constant $C$, such that for all $t\in\mathbb{N}$ and any $|x|\leq 2a_{t}$ with $\mathbf{P}(S_{t}=x)\neq0$,
\begin{equation}\label{e246}
\mathbf{P}(S_{t}=x)\geq\frac{C}{a_{t}}.
\end{equation}
When $|z_{2}|\leq 2a_{t_{2}}$, by \eqref{e236} and \eqref{e246}, we have
\begin{equation}\label{e247}
\frac{\mathbf{P}(S_{t_{1}}=z_{1})\mathbf{P}(S_{t_{2}}-S_{t_{1}}=z_{2}-z_{1})}{\mathbf{P}(S_{t_{2}}=z_{2})}\leq C\frac{a_{t_{2}}}{a_{t_{1}}a_{t_{2}-t_{1}}}=C\frac{t_{2}\varphi(t_{2})}{t_{1}\varphi(t_{1})(t_{2}-t_{1})\varphi(t_{2}-t_{1})}.
\end{equation}
Suppose $t_{1}\geq t_{2}-t_{1}$. Then $t_{2}/t_{1}\leq2$. By Potter bounds (\textit{cf}. \cite[Theorem 1.5.6]{BGT89}),
\begin{equation}\label{e248}
\frac{\mathbf{P}(S_{t_{1}}=z_{1})\mathbf{P}(S_{t_{2}}-S_{t_{1}}=z_{2}-z_{1})}{\mathbf{P}(S_{t_{2}}=z_{2})}\leq\frac{C}{a_{t_{1}}\wedge a_{t_{2}-t_{1}}}.
\end{equation}
When $|z_{2}|\geq 2a_{t_{2}}$, by \eqref{e115},
\begin{equation}\label{e249}
\mathbf{P}(S_{t_{2}}=z_{2})\geq Ct_{2}L(|z_{2}|)/(z_{2})^{2}.
\end{equation}
Suppose $|z_{1}|\geq|z_{2}-z_{1}|$. Then $|z_{1}|\geq a_{t_{2}}\geq a_{t_{1}}$. We can apply the upper bound in \eqref{e115} to $\mathbf{P}(S_{t_{1}}=z_{1})$ and apply \eqref{e236} to $\mathbf{P}(S_{t_{2}-t_{1}}=z_{2}-z_{1})$, and then by \eqref{e249}, we have
\begin{equation*}
\frac{\mathbf{P}(S_{t_{1}}=z_{1})\mathbf{P}(S_{t_{2}}-S_{t_{1}}=z_{2}-z_{1})}{\mathbf{P}(S_{t_{2}}=z_{2})}\leq\frac{t_{1}(z_{2})^{2}L(|z_{1}|)}{t_{2}(z_{1})^{2}L(|z_{2}|)}\frac{C}{a_{t_{2}-t_{1}}}.
\end{equation*}
Since $t_{1}/t_{2}\leq 1$ and $|z_{2}|/|z_{1}|\leq 2$, by Potter bounds (\textit{cf}. \cite[Theorem 1.5.6]{BGT89}), \eqref{e248} also holds, i.e., we have establish \eqref{e248} for any $z_{2}\in\mathbb{Z}$.

Then, by \eqref{e248}, \eqref{e112} and \eqref{e110}, we have
\begin{equation*}
\begin{split}
&\sum\limits_{0<t_{1}<\cdots<t_{j}<s\atop|t_{i}-t_{i-1}|\leq u, i=1,\cdots,j}\mathbf{P}((0,t_{1},\cdots,t_{j},s),(0,z_{1},\cdots,z_{j},x))\\
\leq&\sum\limits_{0<t_{1}<\cdots<t_{j}<s\atop|t_{i}-t_{i-1}|\leq u, i=1,\cdots,j}\frac{C}{a_{t_{1}}\wedge a_{t_{2}-t_{1}}}\mathbf{P}((0,t_{2},\cdots,t_{j},s),(0,z_{2},\cdots,z_{j},x))\\
\leq&\sum\limits_{0<t_{1}<2u}\frac{C}{a_{t_{1}}\wedge a_{2u-t_{1}}}\sum\limits_{0<t_{2}<\cdots<t_{j}<s\atop|t_{i}-t_{i-1}|\leq u, i=2,\cdots,j, t_{1}:=0}\mathbf{P}((0,t_{2},\cdots,t_{j},s),(0,z_{2},\cdots,z_{j},x))\\
\leq&2CD(u)\sum\limits_{0<t_{2}<\cdots<t_{j}<s\atop|t_{i}-t_{i-1}|\leq u, i=2,\cdots,j, t_{1}:=0}\mathbf{P}((0,t_{2},\cdots,t_{j},s),(0,z_{2},\cdots,z_{j},x)).
\end{split}
\end{equation*}
By induction, we then prove \eqref{e244}.
\end{proof}
The case $r=q$ in \eqref{e241} will be dealt with later. For $1\leq r\leq q-1$ in \eqref{e241}, i.e. $|\underline{s}|\geq2$, we apply Lemma \ref{l206} for all terms in \eqref{e243} with $t,s$-indices larger than $s_{k}$ to obtain an upper bound
\begin{equation}\label{e250}
\begin{split}
\mathbbm{1}_{\underline{t}\in\mathcal{T}_{r}(\underline{s})}&\sum\limits_{0= m_{0}=\cdots=m_{k-1}<\atop m_{k}\leq m_{k+1}\leq\cdots\leq m_{q-r}\leq r}\left(\sum\limits_{0<t_{1}<\cdots<t_{m_{0}}<s_{0}}\mathbf{P}((t_{1},\cdots,t_{m_{0}},s_{0}),(S_{t_{1}},\cdots,S_{t_{m_{0}}},x_{0}))\right)\\
\times&\prod\limits_{i=1}^{k-1}p_{s_{i}-s_{i-1}}(x_{i-1},x_{i})\\
\times&\sum\limits_{s_{k-1}<t_{m_{k-1}+1}<\cdots<t_{m_{k}}<s_{k}}\mathbf{P}((x_{k-1},S_{t_{m_{k-1}+1}},\cdots,S_{m_{k}},x_{k}))\\
\times&(CD(u))^{m_{q-r}-m_{k}}\prod\limits_{i=k+1}^{q-r}p_{s_{i}-s_{i-1}}(x_{i-1},x_{i})\\
\times&\sum\limits_{s_{q-r}<t_{m_{q-r}+1}<\cdots<t_{r}<l}\mathbf{P}((s_{q-r},t_{m_{q-r}+1},\cdots,t_{r}),(x_{q-r},S_{t_{m_{q-r}+1}},\cdots,S_{t_{r}})).
\end{split}
\end{equation}
Recall that the factor in the first line of \eqref{e250} is $1$ if $m_{0}=0$ and note that if $m_{q-r}<r$, i.e. $t_{1}<s_{q-r}$, we should further bound the last line in \eqref{e250} from above by
$(CD(u))^{r-m_{q-r}}$, which is due to \eqref{e112} and \eqref{e110}.

Note that the number of possible interlacements of $0\leq m_{0}\leq\cdots\leq m_{q-r}\leq r$ is not larger than $2^{q}$. Hence, according to the value of $k$, \eqref{e250} can be bounded above by
\begin{equation}\label{e251}
\begin{split}
J_{0}=2^{q}\mathbbm{1}_{\underline{t}\in\mathcal{T}_{r}(\underline{s})}&\sum\limits_{m_{k}=1}^{r}(CD(u))^{r-m_{k}}\times\\
&\sum\limits_{0<t_{1}<\cdots<t_{m_{0}}<s_{0}}\mathbf{P}((t_{1},\cdots,t_{m_{0}},s_{0}),(S_{t_{1}},\cdots,S_{t_{m_{0}}},x_{0}))\prod\limits_{i=1}^{q-r}p_{s_{i}-s_{i-1}}(x_{i-1},x_{i})
\end{split}
\end{equation}
if $k=0$;
\begin{equation}\label{e252}
\begin{split}
J_{k}=2^{q}\mathbbm{1}_{\underline{t}\in\mathcal{T}_{r}(\underline{s})}\sum\limits_{m_{k}=1}^{r}&(CD(u)^{r-m_{k}})\prod\limits_{i=1}^{k-1}p_{s_{i}-s_{i-1}}(x_{i-1}-x_{i})\\
&\times\sum\limits_{s_{k-1}<t_{1}<\cdots<t_{m_{k}}<s_{k}}\mathbf{P}((s_{k-1},t_{1},\cdots,t_{m_{k}},s_{k}))\\
&\times\prod\limits_{i=k+1}^{q-r}p_{s_{i}-s_{i-1}}(x_{i-1},x_{i})\\
\end{split}
\end{equation}
if $1\leq k\leq q-r$; and
\begin{equation}\label{e253}
\begin{split}
J_{q+1-r}=2^{q}\mathbbm{1}_{\underline{t}\in\mathcal{T}_{r}(\underline{s})}\sum\limits_{m_{k}=1}^{r}&\prod\limits_{i=1}^{q-r}p_{s_{i}-s_{i-1}}(x_{i-1},x_{i})\\
&\times\sum\limits_{s_{q-r}<t_{1}<\cdots<t_{r}\leq l}\mathbf{P}((s_{q-r},t_{1},\cdots,t_{r}),(x_{q-r},S_{t_{1}},\cdots,S_{t_{r}}))
\end{split}
\end{equation}
if $k=q+1-r$.

Now we can expand the square term in \eqref{e242} and then bound \eqref{e242} from above by
\begin{equation*}
\sum\limits_{\underline{s}\in\mathcal{S}_{q-r}}\sum\limits_{\underline{x}\in(\tilde{I}_{0})^{q+1-r}}\sum\limits_{k,k'=1}^{q+1-r}\sum\limits_{m_{k},m'_{k'}=1}^{r}J_{k}J_{k'},
\end{equation*}
where the expressions for $J_{k}$ and $J_{k'}$ can be \eqref{e251},\eqref{e252}, or \eqref{e253}. We will use different summing strategies to bound
\begin{equation}\label{e2a}
\sum\limits_{\underline{s}\in\mathcal{S}_{q-r}}\sum\limits_{\underline{x}\in(\tilde{I}_{0})^{q+1-r}}\sum\limits_{m_{k},m'_{k'}=1}^{r}J_{k}J_{k'}
\end{equation}
for different $k$ and $k'$. There are two basic cases:
\begin{equation*}
\begin{split}
&\mbox{{\bf Case A:}}~k=k',\\
&\mbox{{\bf Case B:}}~k\neq k',
\end{split}
\end{equation*}
and we start by bounding {\bf Case A:}~$k=k'$.

According to the value of $k$ and $k'$, there are three sub-cases of {\bf Case A}:
\begin{equation*}
\begin{split}
&\mbox{{\bf Case A1:}}~k=k=0,\\
&\mbox{{\bf Case A2:}}~k=k'=q+1-r,\\
&\mbox{{\bf Case A3:}}~1\leq k=k'\leq q-r.
\end{split}
\end{equation*}

{\bf Case A1:}~$k=k'=0$:

If $k=k'=0$ in \eqref{e2a}, then we can first fix the position of $s_{0}$, which has at most $l$ choices. Note that we have the term $\prod_{i=1}^{q-r}\sum_{(x_{1},\cdots,x_{q-r})\in(\tilde{I}_{0})^{q-r}}(p_{s_{i}-s_{i-1}}(x_{i-1},x_{i}))^{2}$. Hence, for any $x_{0}$, we can sum over $s_{1},\cdots,s_{q-r}$ and $x_{1},\cdots,x_{q-r}$ by \eqref{e110}, which gives $\prod_{i=1}^{q-r}D(s_{i}-s_{i-1})$. By Potter bounds \cite[Theorem 1.5.6]{BGT89},
\begin{equation}\label{e254}
\prod\limits_{i=1}^{q-r}D(s_{i}-s_{i-1})\leq (CD(u))^{q-r}\prod\limits_{i=1}^{q-r}\frac{s_{i}-s_{i-1}}{u}.
\end{equation}
Since $s_{q-r}\leq qu$, $\prod_{i=1}^{q-r}((s_{i}-s_{i-1})/u)\leq(q/(q-r))^{q-r}$. Hence, \eqref{e254} is bounded above by $C^{q}D(u)^{q-r}$.

Next, we use the trivial bound
\begin{equation*}
\sum\limits_{0<t'_{1}<\cdots<t'_{m'_{0}}<s_{0}}\mathbf{P}((t'_{1},\cdots,t'_{r},s_{0}),(S_{t'_{1}},\cdots,S_{t'_{m'_{0}}},x_{0}))\leq(CD(u))^{m'_{0}}
\end{equation*}
and then sum over $s_{0}-t_{m_{0}}$ and $x_{0}$ by
\begin{equation*}
\sum\limits_{s_{0}-t_{m_{0}}=1}^{u}\sum\limits_{x_{0}\in\tilde{I}_{0}}p_{s_{0}-t_{m_{0}}}(S_{t_{r}},x_{0})\leq\sum\limits_{t=1}^{u}1=u.
\end{equation*}
At last, we use the trivial bound
\begin{equation*}
\sum\limits_{0<t_{1}<\cdots<t_{m_{0}}}\mathbf{P}((t_{1},\cdots,t_{m_{0}}),(S_{t_{1}},\cdots,S_{t_{m_{0}}}))\leq(CD(u))^{m_{0}-1}.
\end{equation*}
Now we obtain that for any $m_{0}$ and $m'_{0}$,
\begin{equation*}
\sum\limits_{\underline{s}\in\mathcal{S}_{q-r}}\sum\limits_{x\in(\tilde{I}_{0})^{q+1-r}}(J_{0})^{2}\leq C^{q}ulD(u)^{q+r-1}.
\end{equation*}

{\bf Case A2:}~$k=k'=q+1-r$:

If $k=k'=q+1-r$ in \eqref{e2a}, then we can first fix the position of $s_{q-r}$ and then apply the strategy above to obtain that
\begin{equation*}
\sum\limits_{\underline{s}\in\mathcal{S}_{q-r}}\sum\limits_{x\in(\tilde{I}_{0})^{q+1-r}}(J_{q+1-r})^{2}\leq C^{q}ulD(u)^{q+r-1}.
\end{equation*}

{\bf Case A3:}~$1\leq k=k'\leq q-r$:

If $1\leq k=k'\leq q-r$ in \eqref{e2a}, then $s_{k-1}<t_{1}$ and $s_{k-1}<t'_{1}$ by \eqref{e252} and we can first fix the position of $s_{k-1}$, which has at most $l$ choices. Note that we have the term $\prod_{i=k-1}^{1}\sum_{(x_{k-2},\cdots,x_{0})\in(\tilde{I}_{0})^{k-1}}(p_{s_{i}-s_{i-1}}(x_{i-1},x_{i}))^{2}$. Hence, for any $x_{k-1}$, we can sum over $s_{0},\cdots,s_{k-2}$ and $x_{0},\cdots,x_{k-2}$ by \eqref{e110} and \eqref{e254} (hold $(s_{k-1},x_{k-1})$ for the moment), which gives $C^{q}D(u)^{k-1}$. For the same reason, then we can sum over $s_{k+1},\cdots,s_{q-r}$ and $x_{k+1},\cdots,x_{q-r}$ (hold $(s_{k},x_{k})$ for the moment), which gives $C^{q}D(u)^{q-r-k}$ . These summations and products together give $C^{q}D(u)^{q-r-1}$.

Next, we apply Lemma \ref{l206} to obtain
\begin{equation*}
\begin{split}
\sum\limits_{s_{k-1}<t'_{1}<\cdots<t'_{m'_{k}}<s_{k}}&\mathbf{P}((s_{k-1},t'_{1},\cdots,t'_{m'_{k}},s_{k}),(x_{k-1},S_{t'_{1}},\cdots,S_{t'_{m'_{k}}},x_{k}))\\
&\leq(CD(qu))^{m'_{k}}p_{s_{k}-s_{k-1}}(x_{i-1},x_{i}).
\end{split}
\end{equation*}
Then it remains to bound
\begin{equation}\label{e255}
\begin{split}
\sum\limits_{s_{k}-s_{k-1}=1}^{m_{k}u}&\sum\limits_{s_{k-1}<t_{1}<\cdots<t_{m_{k}}<s_{k}}\sum\limits_{x_{k-1},x_{k}\in\tilde{I}_{0}}\\
&p_{s_{k}-s_{k-1}}(x_{k-1},x_{k})\mathbf{P}((s_{k-1},t_{1},\cdots,t_{m_{k}},s_{k}),(x_{k-1},S_{t_{1}},\cdots,S_{t_{m_{k}}},x_{k})).
\end{split}
\end{equation}
Note that by \eqref{e114}, there exists a $T>0$, such that for all $t\geq T$, $P(S_{t}=x)>0$ for any $x\in\mathbb{Z}$. Hence, we can split \eqref{e255} into three parts:

(i) $\min\{t_{1}-s_{k-1}, s_{k}-t_{m_{k}}\}\geq T$.

(ii) $\min\{t_{1}-s_{k-1}, s_{k}-t_{m_{k}}\}<T$ and $\max\{t_{1}-s_{k-1}, s_{k}-t_{m_{k}}\}\geq T$.

(iii) $\max\{t_{1}-s_{k-1}, s_{k}-t_{m_{k}}\}<T$.

To deal with part (i) in \eqref{e255}, we need the following lemma.
\begin{lemma}\label{l207}
For any $\epsilon>0$, there exists a constant $C$, such that for any $k\geq2$ and all $n\geq k$,
\begin{equation}\label{e256}
\sum\limits_{j_{1}+\cdots+j_{k}=n\atop j_{i}>0, \forall i\in\{1,\cdots,k\}}\frac{1}{a_{j_{1}+j_{2}+n}}\left(\prod\limits_{i=3}^{k}\frac{1}{a_{j_{i}}}\mathbbm{1}_{\{k\geq3\}}+\mathbbm{1}_{\{k<3\}}\right)\leq n^{\epsilon^{4}}C^{k-1}D(n)^{k-2}.
\end{equation}
\end{lemma}
\begin{proof}[Proof of Lemma \ref{l207}]
We prove it by induction.

For $k=2$, by Potter bounds \cite[Theorem 1.5.6]{BGT89},
\begin{equation*}
\sum\limits_{j_{1}+j_{2}=n\atop j_{1},j_{2}>0}\frac{1}{a_{j_{1}+j_{2}+n}}=\frac{n-1}{a_{2n}}\leq\frac{1}{2\phi(2n)}\leq Cn^{\epsilon^{4}}.
\end{equation*}
Suppose \eqref{e256} is valid for $k\geq2$ and then for $k+1$, since $a_{(\cdot)}$ is increasing,
\begin{equation*}
\begin{split}
&\sum\limits_{j_{1}+\cdots+j_{k+1}=n\atop j_{i}>0, \forall i\in\{1,\cdots,k+1\}}\frac{1}{a_{j_{1}+j_{2}+n}}\prod\limits_{i=3}^{k+1}\frac{1}{a_{j_{i}}}\\
\leq&\sum\limits_{j_{k+1}=1}^{n-k}\frac{1}{a_{j_{k+1}}}\sum\limits_{j_{1}+\cdots+j_{k}=n-j_{k+1}\atop j_{i}>0, \forall i\in\{1,\cdots,k\}}\frac{1}{a_{j_{1}+j_{2}+n-j_{k+1}}}\prod\limits_{i=3}^{k}\frac{1}{a_{j_{i}}}\\
\leq&\sum\limits_{j_{k+1}=1}^{n-k}\frac{1}{a_{j_{k+1}}}(n-j_{k+1})^{\epsilon^{4}}C^{k-1}D(n-j_{k+1})^{k-2}\\
\leq&n^{\epsilon^{4}}C^{k}D(n))^{k-1}.
\end{split}
\end{equation*}
Then the induction is completed and the lemma has been proved.
\end{proof}
Since $\min\{t_{1}-s_{k-1}, s_{k}-t_{m_{k}}\}\geq T$, we have
\begin{equation*}
\begin{split}
&\sum\limits_{x_{k-1},x_{k}\in\tilde{I}_{0}}p_{t_{1}-s_{k-1}}(x_{k-1},S_{t_{1}})p_{s_{k}-s_{k-1}}(x_{k-1},x_{k})p_{s_{k}-t_{m_{k}}}(S_{t_{m_{k}}},x_{k})\\
\leq&C p_{t_{1}-s_{k-1}+s_{k}-s_{k-1}+s_{k}-t_{m_{k}}}(S_{t_{1}},S_{t_{m_{k}}})\leq\frac{C}{a_{t_{1}-s_{k-1}+s_{k}-t_{m_{k}}+s_{k}-s_{k-1}}},
\end{split}
\end{equation*}
where
\begin{equation*}
\begin{split}
&p_{t_{m_{k=1}+1}-s_{k-1}}(x_{k-1},S_{t_{1}})\leq Cp_{t_{1}-s_{k-1}}(S_{t_{1}},x_{k-1})\\ &p_{s_{k}-t_{m_{k}}}(S_{t_{m_{k}}},x_{k})\leq Cp_{s_{k}-t_{m_{k}}}(x_{k},S_{t_{m_{k}}})\\
\end{split}
\end{equation*}
follow from the arguments \eqref{e246}-\eqref{e249}. Then, by Lemma \ref{l207}, part (i) in \eqref{e255} is bounded above by
\begin{equation}\label{e257}
\begin{split}
&C\sum\limits_{s_{k}-s_{k-1}=1}^{m_{k}u}\sum\limits_{s_{k-1}<t_{1}<\cdots<t_{m_{k}}<s_{k}}\frac{1}{a_{t_{1}-s_{k-1}+s_{k}-t_{m_{k}}+s_{k}-s_{k-1}}}\prod\limits_{i=2}^{t_{m_{k}}}\frac{1}{a_{t_{i}-t_{i-1}}}\\
\leq&C^{m_{k}}(m_{k}u)^{1+\epsilon^{4}}(D(m_{k}u))^{m_{k}-1}\leq C^{m_{k}}(qu)^{1+\epsilon^{4}}(D(qu))^{m_{k}-1}.
\end{split}
\end{equation}

We will use the following lemma to handle $D(qu)$.
\begin{lemma}\label{l208}
Recall that $u\to\infty$ as the inverse temperature $\beta\to0$. We have
\begin{equation}\label{e258}
\lim\limits_{\beta\to 0}\frac{D(qu)}{D(u)}=1
\end{equation}
\end{lemma}
\begin{proof}[Proof of Lemma \ref{l208}]
Without loss of generality, we may assume that $D(\cdot)$ and $\varphi(\cdot)$ are differentiable by \cite[Theorem 1.8.2]{BGT89}. Then by definition of $D(\cdot)$, it follows that $D'(u)\sim(u\varphi(u))^{-1}$.

We will apply \cite[Proposition 2.3.2, Theorem 2.3.1]{BGT89} to prove \eqref{e258}, which reduces \eqref{e258} to showing
\begin{equation*}
\lim\limits_{\beta\to 0}\frac{uD'(u)\log q}{D(u)}=0.
\end{equation*}
By recalling the definition of $u$ and $q$ from \eqref{e206}, we need to show
\begin{equation}\label{e259}
\lim\limits_{\beta\to 0}\max\left\{\frac{f_{1}(u):=\log\left(\log\sqrt{\varphi\left(u^{\frac{1}{1-\epsilon^{2}}}\right)}\vee e\right)}{\varphi(u)D(u)},\frac{f_{2}(u):=\log\log{D\left(u^{\frac{1}{1-\epsilon^{2}}}\right)}}{\varphi(u)D(u)}\right\}=0.
\end{equation}
We will prove \eqref{e259} by proving both $f_{1}(u)/\varphi(u)D(u)$ and $f_{2}(u)/\varphi(u)D(u)$ tend to $0$ as $\beta$ tends to $0$.

For $f_{1}(u)/\varphi(u)D(u)$, note that $\varphi(u)D(u)\to\infty$ as $\beta\to0$. Then by L'Hospital rule, we have
\begin{equation*}
\begin{split}
\lim\limits_{\beta\to 0}\frac{\log\log\sqrt{\varphi\left(u^{\frac{1}{1-\epsilon^{2}}}\right)}}{\varphi(u)D(u)}&=\lim\limits_{\beta\to 0}\frac{1}{\log\varphi\left(u^{\frac{1}{1-\epsilon^{2}}}\right)}\frac{1}{\varphi\left(u^{\frac{1}{1-\epsilon^{2}}}\right)}\frac{u^{\frac{\epsilon^{2}}{1-\epsilon^{2}}}\varphi\left(u^{\frac{1}{1-\epsilon^{2}}}\right)}{(1-\epsilon^{2})(\varphi'(u)D(u)+\varphi(u)D'(u))}\\
&=\lim\limits_{\beta\to 0}\frac{1}{\log\varphi\left(u^{\frac{1}{1-\epsilon^{2}}}\right)}\frac{1}{\varphi\left(u^{\frac{1}{1-\epsilon^{2}}}\right)}\frac{u^{\frac{1}{1-\epsilon^{2}}}\varphi\left(u^{\frac{1}{1-\epsilon^{2}}}\right)}{(1-\epsilon^{2})u(\varphi'(u)D(u)+1)}=0,
\end{split}
\end{equation*}
where we use the property that $\lim_{x\to\infty}x\varphi'(x)/\varphi(x)=0$ by \cite[Section 1.8]{BGT89}.

By the same computation as above, we also have
\begin{equation*}
\lim\limits_{\beta\to 0}\frac{\log\log D(u^{\frac{1}{1-\epsilon^{2}}})}{\varphi(u)D(u)}=0
\end{equation*}
and thus \eqref{e258} is proved.
\end{proof}
By Lemma \ref{l208}, \eqref{e257} can be bounded above by $(2C)^{m_{k}}(qu)^{1+\epsilon^{4}}D(u)^{m_{k}-1}$.

For part (ii) in \eqref{e255}, let us assume $s_{k}-t_{m_{k}}\geq T$ and $t_{1}-s_{k-1}<T$. Then
\begin{equation*}
\begin{split}
&\sum\limits_{x_{k-1},x_{k}\in\tilde{I}_{0}}p_{t_{1}-s_{k-1}}(x_{k-1},S_{t_{1}})p_{s_{k}-s_{k-1}}(x_{k-1},x_{k})p_{s_{k}-t_{m_{k}}}(S_{t_{m_{k}}},x_{k})\\
\leq&C\sum\limits_{x_{k-1}\in\tilde{I}_{0}}p_{t_{1}-s_{k-1}}(x_{k-1},S_{t_{1}}) p_{s_{k}-s_{k-1}+s_{k}-t_{m_{k}}}(x_{k-1},S_{t_{m_{k}}})\\
\leq&\sum\limits_{x_{k-1}\in\tilde{I}_{0}}p_{t_{1}-s_{k-1}}(x_{k-1},S_{t_{1}})\frac{C}{a_{s_{k}-t_{m_{k}}+s_{k}-s_{k-1}}}\leq\frac{C}{a_{s_{k}-t_{m_{k}}+s_{k}-s_{k-1}}}.
\end{split}
\end{equation*}
It is not hard to check that by the proof of Lemma \ref{l207}, it follows that
\begin{equation*}
\sum\limits_{j_{1}+\cdots+j_{k}=n\atop j_{i}>0, \forall i\in\{1,\cdots,k\}}\frac{1}{a_{j_{1}+n}}\left(\prod\limits_{i=2}^{k}\frac{1}{a_{j_{i}}}\mathbbm{1}_{\{k\geq2\}}+\mathbbm{1}_{\{k<2\}}\right)\leq n^{\epsilon^{4}}C^{k-1}D(n)^{k-1}.
\end{equation*}
Hence, part (ii) in \eqref{e255} can be bounded above by $TC^{m_{k}-1}(qu)^{1+\epsilon^{4}}D(u)^{m_{k}-1}$, where $T$ comes from $\sum_{t_{1}-s_{t-1}=1}^{T}$.

For part (iii) in \eqref{e255}, we have
\begin{equation*}
\begin{split}
&\sum\limits_{x_{k-1},x_{k}\in\tilde{I}_{0}}p_{t_{1}-s_{k-1}}(x_{k-1},S_{t_{1}})p_{s_{k}-s_{k-1}}(x_{k-1},x_{k})p_{s_{k}-t_{m_{k}}}(S_{t_{m_{k}}},x_{k})\\
\leq&\frac{C}{a_{s_{k}-s_{k-1}}}\sum\limits_{x_{k-1},x_{k}\in\tilde{I}_{0}}p_{t_{1}-s_{k-1}}(x_{k-1},S_{t_{1}})p_{s_{k}-t_{m_{k}}}(S_{t_{m_{k}}},x_{k})\leq\frac{C}{a_{s_{k}-s_{k-1}}}.
\end{split}
\end{equation*}
Similarly, part (iii) can be bounded above by $T^{2}C^{m_{k}-2}(qu)^{1+\epsilon^{4}}D(u)^{m_{k}-1}$. Hence, \eqref{e255} can be bounded above by $C^{m_{k}}(qu)^{1+\epsilon^{4}}D(u)^{m_{k}-1}$ and we obtain that for any $m_{k}$ and $m'_{k}$,
\begin{equation*}
\sum\limits_{\underline{s}\in\mathcal{S}_{q-r}}\sum\limits_{\underline{x}\in(\tilde{I}_{0})^{q+1-r}}(J_{k})^{2}\leq C^{q}(qu)^{1+\epsilon^{4}}lD(u)^{q+r-1},
\end{equation*}
which finished the estimate for {\bf Case A3}.

Now all sub-cases of {\bf Case A} have been handled and we start to consider {\bf Case B} for \eqref{e2a}. Recall that $k\neq k'$ in {\bf Case B} and we may just assume that $k<k'$. First, we can fix the position of $s_{k-1}$, which has at most $l$ choices. Next, if $k'=q+1-r$, then we just use the trivial bound
\begin{equation*}
\sum\limits_{s_{q-r}<t'_{1}<\cdots<t'_{r}\leq l}\mathbf{P}((s_{q-r},t'_{1},\cdots,t'_{r}),(x_{q-r},S_{t'_{1}},\cdots,S_{t'_{r}}))\leq(CD(u))^{r},
\end{equation*}
while if $k'<q+1-r$, then we apply Lemma \ref{l206} to obtain
\begin{equation*}
\begin{split}
\sum\limits_{s_{k'-1}<t'_{1}<\cdots<t'_{m'_{k'}}<s_{k'}}&\mathbf{P}((s_{k'-1},t'_{1},\cdots,t'_{m'_{k'}},s_{k'}),(x_{k'-1},S_{t'_{1}},\cdots,S_{t'_{m'_{k'}}},x_{k'}))\\
&\leq(CD(u))^{m'_{k'}}p_{s_{k'}-s_{k'-1}}(x_{k'-1},x_{k'}).
\end{split}
\end{equation*}

According to the value of $k$, there are two sub-cases in {\bf Case B}:
\begin{equation*}
\begin{split}
&\mbox{{\bf Case B1:}}~k=0,\\
&\mbox{{\bf Case B2:}}~k>0.
\end{split}
\end{equation*}

{\bf Case B1}:

If $k=0$ in \eqref{e2a}, then we have the term $\prod_{i=1}^{q-r}\sum_{(x_{1},\cdots,x_{q-r})\in(\tilde{I}_{0})^{q-r}}(p_{s_{i}-s_{i-1}}(x_{i-1},x_{i}))^{2}$ and for any $x_{0}$, we can sum over $s_{1},\cdots,s_{q-r}$ and $x_{1},\cdots,x_{q-r}$ by \eqref{e110} and \eqref{e254} to obtain an upper bound $C^{q}D(u)^{q-r}$. Then we can complete the estimate by
\begin{equation*}
\sum\limits_{s_{0}-t_{m_{0}}=1}^{u}\sum\limits_{x_{0}\in\tilde{I}_{0}}p_{s_{0}-t_{m_{0}-1}}(S_{t_{m_{0}}},x_{0})\leq u
\end{equation*}
and
\begin{equation*}
\sum\limits_{0<t_{1}<\cdots<t_{m_{0}}}\mathbf{P}((t_{1},\cdots,t_{m_{0}}),(S_{t_{1}},\cdots,S_{t_{m_{0}}}))\leq(CD(u))^{m_{0}-1}.
\end{equation*}

{\bf Case B2}:

If $k>0$ in \eqref{e2a}, then we have $\prod_{i=k-1}^{1}\sum_{(x_{k-2},\cdots,x_{0})\in(\tilde{I}_{0})^{k-1}}(p_{s_{i}-s_{i-1}}(x_{i-1},x_{i}))^{2}$ and for any $x_{k-1}$, we can sum over $s_{0},\cdots,s_{k-2}$ and $x_{0},\cdots,x_{k-2}$ by \eqref{e110} and \eqref{e254} (hold $(s_{k-1},x_{k-1})$ for the moment), which gives $C^{q}D(u)^{k-1}$. For the same reason, then we can sum over $s_{k+1},\cdots,s_{q-r}$ and $x_{k+1},\cdots,x_{q-r}$ (hold $(s_{k},x_{k})$ for the moment), which gives $C^{q}D(u)^{q-r-k}$. These summations and products together give $C^{q}D(u)^{q-r-1}$, and then we can complete all the estimate by bounding
\begin{equation*}
\begin{split}
\sum\limits_{s_{k}-s_{k-1}=1}^{m_{k}u}&\sum\limits_{s_{k-1}<t_{1}<\cdots<t_{m_{k}}<s_{k}}\sum\limits_{x_{k-1},x_{k}\in\tilde{I}_{0}}\\
&p_{s_{k}-s_{k-1}}(x_{k-1},x_{k})\mathbf{P}((s_{k-1},t_{1},\cdots,t_{m_{k}},s_{k}),(x_{k-1},S_{t_{1}},\cdots,S_{t_{m_{k}}},x_{k})).
\end{split}
\end{equation*}
via \eqref{e255}-\eqref{e257}.

According to the upper bounds in {\bf Case A} and {\bf Case B}, we can obtain an upper bound $C^{q}q^{2}(qu)^{1+\epsilon^{4}}lD(u)^{q+r}$ for \eqref{e2a} by summing over $m_{k}$ and $m'_{k'}$. Recall that our analysis in {\bf Case A} and {\bf Case B} is based on $1\leq r\leq q-1$. Hence, for $1\leq r\leq q-1$, we can sum over $k$ and $k'$ to bound \eqref{e242} from above by $C^{q}u^{1+\epsilon^{4}}lD(u)^{q+r}$, since $q^{5+\epsilon^{2}}\ll C^{q}$.

It still remains to bound the case $r=q$ in \eqref{e242}, where $\underline{s}=\{s_{0}\}$. This is relatively simple. We use the expression in the first line of \eqref{e242}. Suppose that the $t$-index right beside $s_{0}$ is $t_{j}$. Without loss of generality, we may assume $s_{0}<t_{j}$. Then we have
\begin{equation*}
\sum\limits_{t_{j}-s_{0}=1}^{u}\sum\limits_{\tilde{x}_{0}\in\tilde{I}_{0}}p_{t_{j}-s_{0}}(x_{0},S_{t_{j}})\leq u.
\end{equation*}
For the other $t,t'$-indices, we just use the trivial bound
\begin{equation*}
\sum\limits_{t=1}^{u}p_{t}(0,S_{t})\leq D(u)
\end{equation*}
and then we obtain an upper bound $C^{q}ulD(u)^{q+r-1}$ for the case $r=q$ in \eqref{e242}.

Finally, we substitute everything into \eqref{e241} and by recalling $\lambda'(\beta)\sim\beta$, $\beta^{2}D(u)<(1+2\epsilon)$, we have
\begin{equation*}
\begin{split}
\mathbb{V}\mbox{ar}^{S}(X)&\leq(1+\epsilon^{3}/2)^{q+1}+\frac{C^{q}u^{1+\epsilon^{4}} }{2Ra_{l}}\sum\limits_{r=1}^{q}(1+2\epsilon)^{r}\\
&\leq(1+\epsilon^{3}/2)^{q+1}+\frac{q(2C)^{q}}{2R}l^{-\epsilon^{3}}\\
&\leq(1+\epsilon^{3}/2)^{q+1}+1\leq(1+\epsilon^{3})^{q}
\end{split}
\end{equation*}
and we conclude Lemma \ref{l205}.
\end{proof}
\section{Proof of Theorem \ref{t104}}\label{s300}
In this proof, for any given $\beta$ and $\epsilon$, we will estimate the partition function at a special time $N$, defined by
\begin{equation}\label{e301}
N_{\beta,\epsilon}:=\max\limits_{n}\{D(n)\leq(1-\epsilon)/\beta^{2}\}.
\end{equation}
By \cite[Proposition 2.5]{CSY03}, we have
\begin{equation*}
p(\beta)=\sup\limits_{N}\frac{1}{N}\mathbb{E}[\log\hat{Z}_{N,\beta}^{\omega}]\geq\frac{1}{N_{\beta,\epsilon}}\mathbb{E}[\log \hat{Z}_{N_{\beta,\epsilon},\beta}^{\omega}].
\end{equation*}
To simplify the notation, we will use $N$ as $N_{\beta,\epsilon}$ in the following without any ambiguity. We may emphasize several times that the choice of $N$ satisfies \eqref{e301}.

To show \eqref{e119}, we need to bound $\mathbb{E}[\log\hat{Z}_{N,\beta}^{\omega}]$ appropriately. The key ingredient of the proof is the following result proved in \cite{CTT15}. Here we cite a version stated in \cite{BL17}.
\begin{proposition}[{\hspace{1sp}\cite[Proposition 4.3]{BL17}}]\label{p301}
For any $m\in\mathbb{N}$ and any random vector $\eta = (\eta_{1},\cdots,\eta_{m})$ which satisfies the property that there exists a constant $K>0$ such that
\begin{equation}\label{e303}
\mathbb{P}(|\eta|\leq K)=1.
\end{equation}
Then for any convex function $f$, we can find a constant $C_{1}$ uniformly for $m$, $\eta$ and $f$, such that for any $a$, $M$ and any positive $t>0$, the inequality
\begin{equation}\label{e304}
\mathbb{P}\left(f(\eta)\geq a, |\triangledown f(\eta)|\leq M\right)\mathbb{P}\left(f(\eta)\leq a-t\right)\leq2\exp\left(-\frac{t^{2}}{C_{1}K^{2}M^{2}}\right)
\end{equation}
holds, where $|\triangledown f|:=\sqrt{\sum\limits_{i=1}^{m}\left(\frac{\partial f}{\partial x_{i}}\right)^{2}}$ is the norm of the gradient of $f$.
\end{proposition}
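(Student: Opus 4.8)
The plan is to reduce \eqref{e304} to one elementary consequence of convexity, after which the probabilistic content is nothing more than the remark that two independent copies of a vector confined to a ball of radius $K$ differ by at most $2K$ in norm. The only real ingredients are the supporting-hyperplane inequality for convex functions and a case split on the size of $t/M$.

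First I would introduce an independent copy $\eta'$ of $\eta$ on the product space and set $A:=\{f(\eta)\ge a,\ |\triangledown f(\eta)|\le M\}$, depending on $\eta$ alone, and $B':=\{f(\eta')\le a-t\}$, depending on $\eta'$ alone, so that the left-hand side of \eqref{e304} equals $\mathbb{P}^{\otimes2}(A\cap B')$. The heart of the argument is the inclusion $A\cap B'\subseteq\{|\eta-\eta'|\ge t/M\}$ (one may assume $M>0$; if $M=0$ then $A$ forces $\eta$ to be a global minimizer of $f$, whence $B'=\emptyset$ and \eqref{e304} is trivial). Indeed, on $A$ the point $x=\eta$ has $f(x)\ge a$ and admits a subgradient $v$ with $|v|\le M$, so convexity of $f$ gives $f(y)\ge f(x)+\langle v,\,y-x\rangle\ge a-M|y-x|$ for every $y$; taking $y=\eta'$ and using $f(\eta')\le a-t$ on $B'$ yields $a-t\ge a-M|\eta-\eta'|$, i.e.\ $|\eta-\eta'|\ge t/M$. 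For $f$ merely convex one either reads $\triangledown f$ as a measurable selection of the subdifferential — which exists and makes $A$ Borel — or first mollifies $f$ into a smooth convex function and passes to the limit; neither changes the displayed inequality.

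Next I would invoke the boundedness hypothesis \eqref{e303}. The inclusion and independence give
\[
\mathbb{P}(f(\eta)\ge a,\ |\triangledown f(\eta)|\le M)\,\mathbb{P}(f(\eta)\le a-t)=\mathbb{P}^{\otimes2}(A\cap B')\le\mathbb{P}^{\otimes2}(|\eta-\eta'|\ge t/M),
\]
while $|\eta|\le K$ and $|\eta'|\le K$ almost surely force $|\eta-\eta'|\le 2K$ surely. If $t/M>2K$, the right-hand side is $0$ and \eqref{e304} holds outright. If $t/M\le 2K$, then $t^2/(C_1K^2M^2)\le 4/C_1$, hence $2\exp(-t^2/(C_1K^2M^2))\ge 2e^{-4/C_1}\ge 1$ provided $C_1\ge 4/\log 2$; and since the left-hand side of \eqref{e304} is a product of two probabilities, it is at most $1$, so the inequality holds in this case as well. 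Thus any fixed constant $C_1\ge 4/\log 2$ (say $C_1=6$) works, and the uniformity in $m$, in the law of $\eta$, and in $f$ is manifest.

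There is no genuinely hard step; if pressed I would single out the regularity of $f$ — making sense of $\triangledown f(\eta)$ and the measurability of $A$ for a general convex $f$ — which is disposed of by the mollification above. The substantive difficulty lies not in proving Proposition \ref{p301} but in applying it, which is why \cite{CTT15,BL17} cast it in this unusual ``product of two probabilities'' shape: in practice one must exhibit a function $f$ that is convex in the disorder, choose $a$ near a typical value of $f$, verify that $|\triangledown f|\le M$ on an event of probability bounded away from $0$, and check that the resulting Gaussian-type tail is strong enough for the estimate at hand.
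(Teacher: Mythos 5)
The paper does not prove Proposition \ref{p301}; it imports it from \cite{BL17}, which in turn rests on a Talagrand-type concentration estimate from \cite{CTT15}, so there is no internal proof to compare against. Your proposal, however, contains a genuine gap. The supporting-hyperplane step is fine: with $A=\{f(\eta)\ge a,\ |\triangledown f(\eta)|\le M\}$ and $B'=\{f(\eta')\le a-t\}$ for an independent copy $\eta'$, convexity gives $A\cap B'\subseteq\{|\eta-\eta'|_{2}\ge t/M\}$, where $|\cdot|_{2}$ is the Euclidean norm (the one appearing in $\triangledown f$). The error is in the next line, where you conclude $|\eta-\eta'|_{2}\le 2K$. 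That would hold if $\mathbb{P}(|\eta|\le K)=1$ bounded the Euclidean norm of the whole vector, but both in \cite{CTT15,BL17} and in the way the proposition is used in Section~3 of this paper, $K$ bounds each \emph{coordinate}: $\max_{i}|\eta_{i}|\le K$ a.s. The application takes $\eta=(\tilde\omega_{n,x})_{(n,x)\in\mathcal{T}}$ with every entry truncated at $(\log N)^{2}$, and the resulting bound \eqref{e331} has $K^{2}=(\log N)^{4}$ in the exponent rather than $(\log N)^{4}|\mathcal{T}|$; the phrase ``uniformly for $m$'' in the statement is sending the same signal. Under the coordinatewise reading one only gets $|\eta-\eta'|_{2}\le 2K\sqrt{m}$, so your dichotomy yields, in the non-trivial range $t/M\le 2K\sqrt m$, only $2\exp(-t^{2}/(C_{1}K^{2}M^{2}))\ge 2e^{-4m/C_{1}}$, which fails to exceed $1$ as soon as $m$ is moderately large. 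Since $m=|\mathcal{T}|$ grows with $N$, this is exactly the regime the proposition must cover, so the argument does not close.

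The real content of Proposition \ref{p301} is this dimension-free gain, and it is not elementary: the proof in \cite{CTT15} invokes Talagrand's convex-distance concentration inequality for product measures on $[-K,K]^{m}$ applied to the convex Lipschitz function $f$. Your reduction to the event $\{|\eta-\eta'|_{2}\ge t/M\}$ is a sensible first move, but the finishing step has to produce a sub-Gaussian tail whose constant does not depend on $m$, and that is precisely what the Talagrand machinery supplies and a naive diameter bound cannot. Your closing remark that ``there is no genuinely hard step'' should itself have been a warning sign: a trivially true inequality could not deliver the left-tail estimate \eqref{e334} in a setting where the dimension of the disorder field diverges with $N$.
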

We will apply Proposition \ref{p301} to $\log\hat{Z}_{N,\beta}^{\omega}$ and the environment $\omega$. However, this proposition is only valid for bounded and finite-dimension random vector. Since $\log\hat{Z}_{N,\beta}^{\omega}$ is a function of countable-dimension random field and $\omega$ may not be bounded, we need to restrict the range of the random walk $S$ so that $\log\hat{Z}_{N,\beta}^{\omega}$ is determined by finite many $\omega_{i,x}$'s and respectively, truncate $\omega$ so that it is finite.

First, we define a subset of $\mathbb{N}\times\mathbb{Z}$ by
\begin{equation*}
\mathcal{T}=\mathcal{T}_{N}:=\{(n,x): 1\leq n\leq N, |x-b_{N}|\leq Ra_{N}\},
\end{equation*}
where $R$ is a constant that will be determined later and $a_{N}, b_{N}$ has been introduced in \eqref{e102}. We will choose $R$ large enough so that the trajectory of $S$ up to time $N$ entirely falls in $\mathcal{T}$ with probability close to $1$ for any $N=N_{\beta,\epsilon}$. We can also assume that $a_{N}$ is an integer without loss of generality.

Then we define
\begin{equation}\label{e306}
\bar{Z}_{N,\beta}^{\omega}:=\mathbf{E}\left[\exp\left(\beta\sum\limits_{n=1}^{N}\omega_{n,S_{n}}-N\lambda(\beta)\right)
\mathbbm{1}_{\{S\in\mathcal{T}\}}\right],
\end{equation}
where $\{S\in\mathcal{T}\}:=\{S:(n,S_{n})\in\mathcal{T},\quad\forall1\leq n\leq N\}$. Note that $\bar{Z}_{N,\beta}^{\omega}\leq\hat{Z}_{N\beta}^{\omega}$. Readers may check that $\log\bar{Z}_{N,\beta}^{\omega}$ is indeed a finite-dimension convex function and hence, we can apply Proposition \ref{p301} to $\log\bar{Z}_{N,\beta}$. Since our goal is to find a lower bound for $\mathbb{E}[\log\hat{Z}_{N,\beta}^{\omega}]$, we can first estimate the left tail of $\log\bar{Z}_{N,\beta}^{\omega}$, which can be done by bounding the first probability on the left-hand side of \eqref{e304} from below.

We show the following result.
\begin{lemma}\label{l302}
For arbitrarily small $\epsilon>0$, there exist $\beta_{\epsilon}$ and $M=M_{\epsilon}$, such that for any $\beta\in(0,\beta_{\epsilon})$, it follows that
\begin{equation}\label{e307}
\mathbb{P}\left(\bar{Z}_{N,\beta}^{\omega}\geq\frac{1}{2},\big|\triangledown\log\bar{Z}_{N_{\beta,\epsilon},\beta}^{\omega}\big|\leq M\right)\geq\frac{\epsilon}{100}.
\end{equation}
\end{lemma}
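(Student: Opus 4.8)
The plan is to bound the probability in \eqref{e307} from below by treating the two constraints jointly: a second-moment (Paley--Zygmund) argument produces a lower bound $\mathbb{P}(\bar Z_{N,\beta}^\omega\ge\tfrac12)\ge c_0\epsilon$, and a first-moment/Chebyshev estimate carried out \emph{on} the event $\{\bar Z_{N,\beta}^\omega\ge\tfrac12\}$ shows that $\{|\triangledown\log\bar Z_{N,\beta}^\omega|>M\}$ costs at most $\tfrac12 c_0\epsilon$ of that probability once $M=M_\epsilon$ is large. Throughout write $L_N:=\sum_{n=1}^N\mathbbm{1}_{\{S_n=\tilde S_n\}}$ for the collision local time of $S$ and an independent copy $\tilde S$, and $\mu(\beta):=\lambda(2\beta)-2\lambda(\beta)$; since $\mathbb{E}[\omega_{n,x}]=0$, $\mathbb{E}[\omega_{n,x}^2]=1$ we have $\mu(\beta)\sim\beta^2$ as $\beta\to0$, and the defining property \eqref{e301} of $N=N_{\beta,\epsilon}$ gives $\beta^2D(N)\le1-\epsilon$. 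Hence, with $s:=e^{\mu(\beta)}-1$, we have $sD(N)\le1-\tfrac\epsilon2$ for all sufficiently small $\beta$, which is what will keep every constant uniform in $\beta$.

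The combinatorial input is that for every integer $k\ge0$,
\[
\mathbf{E}^{\bigotimes2}\!\left[\binom{L_N}{k}\right]\le D(N)^k :
\]
expanding $\binom{L_N}{k}$ as the number of increasing $k$-tuples of collision times and using the Markov property of $S-\tilde S$ one factorizes $\mathbf{P}^{\bigotimes2}(S_{n_1}=\tilde S_{n_1},\dots,S_{n_k}=\tilde S_{n_k})=\prod_{i=1}^k\mathbf{P}^{\bigotimes2}(S_{n_i-n_{i-1}}=\tilde S_{n_i-n_{i-1}})$ (with $n_0=0$), and summing over the gaps yields $D(N)^k$. Consequently
\[
\mathbf{E}^{\bigotimes2}\!\left[e^{\mu(\beta)L_N}\right]=\sum_{k\ge0}s^k\,\mathbf{E}^{\bigotimes2}\!\left[\binom{L_N}{k}\right]\le\frac1{1-sD(N)},\qquad
\mathbf{E}^{\bigotimes2}\!\left[L_N e^{\mu(\beta)L_N}\right]\le\frac{(1+s)D(N)}{(1-sD(N))^2}.
\]
Since $\omega$ is i.i.d. with $\mathbb{E}[e^{\beta\omega}]=e^{\lambda(\beta)}$, one has $\mathbb{E}\big[e^{\beta\sum_{n=1}^N(\omega_{n,S_n}+\omega_{n,\tilde S_n})-2N\lambda(\beta)}\big]=e^{\mu(\beta)L_N}$ for fixed paths, hence $\mathbb{E}[(\bar Z_{N,\beta}^\omega)^2]\le\mathbf{E}^{\bigotimes2}[e^{\mu(\beta)L_N}]\le2/\epsilon$ for small $\beta$, while $\mathbb{E}[\bar Z_{N,\beta}^\omega]=\mathbf{P}(S\in\mathcal T)$ can be made $\ge1-\delta$ for any fixed $\delta>0$, uniformly over $N=N_{\beta,\epsilon}$, by enlarging $R$ (the usual maximal inequality for a walk attracted to a $1$-stable L\'evy process --- no extra hypothesis on $S$ is needed here, matching the generality of Theorem \ref{t104}). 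Paley--Zygmund then gives $\mathbb{P}(\bar Z_{N,\beta}^\omega\ge\tfrac12)\ge c_0\epsilon$ for an absolute constant $c_0>0$.

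For the gradient, differentiating gives $\partial_{\omega_{n,x}}\log\bar Z_{N,\beta}^\omega=\beta\,\bar{\mathbf{P}}_{N,\beta}^\omega(S_n=x)$, where $\bar{\mathbf{P}}_{N,\beta}^\omega$ is the polymer measure restricted to $\{S\in\mathcal T\}$, so that
\[
\big|\triangledown\log\bar Z_{N,\beta}^\omega\big|^2=\beta^2\sum_{n=1}^N\bar{\mathbf{P}}^{\bigotimes2}_{N,\beta}(S_n=\tilde S_n)=\frac{\beta^2A_\omega}{(\bar Z_{N,\beta}^\omega)^2},\qquad
A_\omega:=\mathbf{E}^{\bigotimes2}\!\left[L_N\,e^{\beta\sum_{n=1}^N(\omega_{n,S_n}+\omega_{n,\tilde S_n})-2N\lambda(\beta)}\mathbbm{1}_{\{S,\tilde S\in\mathcal T\}}\right].
\]
On $\{\bar Z_{N,\beta}^\omega\ge\tfrac12\}$ this is at most $4\beta^2A_\omega$, and by the second displayed inequality together with $\beta^2D(N)\le1$ and $(1-sD(N))^{-2}\le4/\epsilon^2$ we get $\beta^2\,\mathbb{E}[A_\omega]\le8/\epsilon^2$. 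Chebyshev then yields
\[
\mathbb{P}\!\left(\bar Z_{N,\beta}^\omega\ge\tfrac12,\ \big|\triangledown\log\bar Z_{N,\beta}^\omega\big|>M\right)\le\mathbb{P}\!\left(4\beta^2A_\omega>M^2\right)\le\frac{4\beta^2\,\mathbb{E}[A_\omega]}{M^2}\le\frac{32}{\epsilon^2M^2},
\]
and choosing $M=M_\epsilon$ so large that $32/(\epsilon^2M_\epsilon^2)\le\tfrac12 c_0\epsilon$ we conclude
\[
\mathbb{P}\!\left(\bar Z_{N,\beta}^\omega\ge\tfrac12,\ \big|\triangledown\log\bar Z_{N,\beta}^\omega\big|\le M_\epsilon\right)\ge c_0\epsilon-\tfrac12c_0\epsilon=\tfrac12c_0\epsilon\ge\frac\epsilon{100}
\]
for all small $\beta$ (the numerical constant $c_0$ being harmless: with $\delta$ small and the Paley--Zygmund exponent optimized, $c_0\ge1/5$, say).

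The main obstacle is not any single estimate but keeping all of them uniform in $\beta$, which is exactly the role of the critical choice \eqref{e301} of $N$: it forces $sD(N)$ to stay bounded away from $1$, so that the geometric series $\sum_k(sD(N))^k$ converges with an $\epsilon$-dependent but $\beta$-independent bound, and it makes $\beta^2A_\omega$ of order $\epsilon^{-2}$ rather than divergent. The one genuinely delicate point is the normalization $(\bar Z_{N,\beta}^\omega)^{-2}$ in $|\triangledown\log\bar Z_{N,\beta}^\omega|^2$: a bare second-moment bound on the gradient would be spoiled by the region where $\bar Z_{N,\beta}^\omega$ is small, and it is precisely the restriction to $\{\bar Z_{N,\beta}^\omega\ge\tfrac12\}$ --- i.e. handling the two constraints in \eqref{e307} together rather than separately --- that tames it. Everything else (the binomial-moment identity, Paley--Zygmund, and the two Chebyshev bounds) is routine.
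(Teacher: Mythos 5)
Your proof is correct and follows the same architecture as the paper's: a Paley--Zygmund lower bound on $\mathbb{P}(\bar Z_{N,\beta}^\omega\ge\tfrac12)$, the identity $|\triangledown\log\bar Z_{N,\beta}^\omega|^2=\beta^2\bar{\mathbf P}_{N,\beta}^{\otimes 2}(S_n=\tilde S_n\text{ for some }n)/(\text{normalization})$, and a Chebyshev bound on the gradient restricted to $\{\bar Z\ge\tfrac12\}$. The genuine difference is in how the two exponential-moment estimates are produced. The paper black-boxes both through Lemma~\ref{l303} (cited from \cite{BL16}), and in particular handles $\mathbf E^{\otimes 2}[L_Ne^{\gamma(\beta)L_N}]$ by the scalar inequality $xe^{(1+\epsilon^3)x}\le C_\epsilon e^{(1+\epsilon^2)x}$ of \eqref{e322}, which converts the first-moment-weighted term back into something Lemma~\ref{l303} can absorb. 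You instead prove both bounds from scratch via the binomial-moment identity $\mathbf E^{\otimes 2}\bigl[\binom{L_N}{k}\bigr]\le D(N)^k$ and the generating function $e^{\mu L_N}=\sum_k\binom{L_N}{k}s^k$ with $s=e^{\mu(\beta)}-1$, differentiating in $s$ to get $\mathbf E^{\otimes 2}[L_Ne^{\mu L_N}]\le(1+s)D(N)/(1-sD(N))^2$. This buys you a self-contained proof of what the paper treats as a citation, avoids the slightly awkward $C_\epsilon$ device, and makes transparent exactly where the criticality condition $\beta^2D(N)\le1-\epsilon$ from \eqref{e301} enters (it keeps $sD(N)$ bounded away from $1$). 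It is the same moment-method proof one would give for \cite[Lemma~6.4]{BL16} anyway, so this is not a conceptually different route, just a more explicit one; the constants you extract ($\mathbb E[(\bar Z)^2]\le 2/\epsilon$ rather than $10/\epsilon$) are incidentally a bit tighter, though your claimed $c_0\ge 1/5$ is optimistic --- Paley--Zygmund with $\mathbb E[\bar Z]\approx 1$ and threshold $\tfrac12$ gives $c_0\approx 1/8$, which is still far more than the $1/50$ needed for the $\epsilon/100$ in \eqref{e307}.
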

To prove Lemma \ref{l302}, we need a result from \cite{BL16}, which we state as
\begin{lemma}[{\hspace{1sp}\cite[Lemma 6.4]{BL16}}]\label{l303}
For any $\epsilon>0$, if $\beta$ is sufficiently small such that $N=N_{\beta,\epsilon}$ is large enough, then
\begin{equation}\label{e308}
\mathbb{E}[(\hat{Z}_{N,\beta}^{\omega})^{2}]\leq\frac{10}{\epsilon}
\end{equation}
\end{lemma}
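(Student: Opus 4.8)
The plan is to run the classical second-moment computation for the normalized partition function and to exploit the renewal structure of the collision process of $S$ with an independent copy $\tilde{S}$.

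\textbf{Step 1: reduction to a collision exponential.} Write $\hat{Z}_{N,\beta}^{\omega}=\mathbf{E}[\exp(\sum_{n=1}^{N}(\beta\omega_{n,S_{n}}-\lambda(\beta)))]$, square it using two independent copies $S,\tilde{S}$ of the walk, and integrate out $\omega$ using its independence across space-time sites. Since $\mathbb{E}[e^{2\beta\omega}]=e^{\lambda(2\beta)}$ while $\mathbb{E}[e^{\beta\omega}]^{2}=e^{2\lambda(\beta)}$, only the sites visited simultaneously by $S$ and $\tilde{S}$ produce a nontrivial factor, so
\[
\mathbb{E}[(\hat{Z}_{N,\beta}^{\omega})^{2}]=\mathbf{E}^{\bigotimes2}\Big[\exp\big((\lambda(2\beta)-2\lambda(\beta))\,L_{N}\big)\Big],\qquad L_{N}:=\#\{1\le n\le N:S_{n}=\tilde{S}_{n}\}.
\]

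\textbf{Step 2: expansion and renewal bound.} Put $\sigma^{2}:=e^{\lambda(2\beta)-2\lambda(\beta)}-1\ge0$ and use $e^{c\mathbbm{1}}=1+(e^{c}-1)\mathbbm{1}$ for an indicator to rewrite the right-hand side as $\mathbf{E}^{\bigotimes2}[\prod_{n=1}^{N}(1+\sigma^{2}\mathbbm{1}_{\{S_{n}=\tilde{S}_{n}\}})]$. Expanding the product over the set of collision times $1\le n_{1}<\cdots<n_{k}\le N$ gives
\[
\mathbb{E}[(\hat{Z}_{N,\beta}^{\omega})^{2}]=\sum_{k=0}^{N}\sigma^{2k}\sum_{1\le n_{1}<\cdots<n_{k}\le N}\mathbf{P}^{\bigotimes2}(S_{n_{i}}=\tilde{S}_{n_{i}},\ \forall i).
\]
Since $S-\tilde{S}$ is a random walk, the Markov property yields $\mathbf{P}^{\bigotimes2}(S_{n_{i}}=\tilde{S}_{n_{i}},\forall i)=\prod_{i=1}^{k}\mathbf{P}^{\bigotimes2}(S_{n_{i}-n_{i-1}}=\tilde{S}_{n_{i}-n_{i-1}})$ with $n_{0}=0$, so the inner sum is bounded by $D(N)^{k}$ (drop the constraint $\sum_{i}(n_{i}-n_{i-1})\le N$ and factorize). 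Hence $\mathbb{E}[(\hat{Z}_{N,\beta}^{\omega})^{2}]\le\sum_{k\ge0}(\sigma^{2}D(N))^{k}=(1-\sigma^{2}D(N))^{-1}$ whenever $\sigma^{2}D(N)<1$.

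\textbf{Step 3: closing via the choice of $N$.} Since $\lambda(0)=\lambda'(0)=0$ and $\lambda''(0)=1$ by \eqref{e103} and the normalization of $\omega$, a Taylor expansion gives $\lambda(2\beta)-2\lambda(\beta)\sim\beta^{2}$ as $\beta\to0$, so $\sigma^{2}\le(1+\epsilon/4)\beta^{2}$ for $\beta$ small. By the definition \eqref{e301}, $D(N)=D(N_{\beta,\epsilon})\le(1-\epsilon)/\beta^{2}$, whence $\sigma^{2}D(N)\le(1+\epsilon/4)(1-\epsilon)\le1-\epsilon/2$, and therefore $\mathbb{E}[(\hat{Z}_{N,\beta}^{\omega})^{2}]\le2/\epsilon\le10/\epsilon$. (The hypotheses ``$\beta$ small'' and ``$N_{\beta,\epsilon}$ large'' are equivalent because $D(n)\to\infty$.) The computation is essentially routine; the only structural input is that $S-\tilde{S}$ is genuinely a random walk, so that the collision times form a renewal process and the inner sum factorizes into the clean geometric series, and the one quantitative point is that the gap between $\sigma^{2}D(N)$ and $1$ stays of order $\epsilon$, which is exactly what the definition \eqref{e301} of $N_{\beta,\epsilon}$ is engineered to ensure. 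No truncation or concentration input is needed here; those enter only in Lemma \ref{l302}.
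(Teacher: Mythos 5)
Your argument is correct and is exactly the standard second-moment computation that the cited reference \cite[Lemma 6.4]{BL16} carries out; the paper itself does not reproduce the proof but simply defers to that lemma. The three ingredients — integrating out $\omega$ to get $\mathbf{E}^{\bigotimes 2}[\exp(\gamma(\beta)L_N)]$ with $\gamma(\beta)=\lambda(2\beta)-2\lambda(\beta)$, the expansion $\prod_n(1+\sigma^2\mathbbm{1}_{\{S_n=\tilde S_n\}})$ and the renewal factorization of the inner sum by $D(N)^k$, and the observation that the definition \eqref{e301} of $N_{\beta,\epsilon}$ together with $\gamma(\beta)\sim\beta^2$ keeps $\sigma^2D(N)\le 1-\epsilon/2$ — are precisely the intended ones, and your geometric-series bound then gives $2/\epsilon\le 10/\epsilon$.
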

\begin{proof}[Proof of Lemma \ref{l302}]
By Lemma \ref{l303} and the fact $\bar{Z}_{N,\beta}^{\omega}\leq\hat{Z}_{N,\beta}^{\omega}$,
\begin{equation*}
\mathbb{E}[(\bar{Z}_{N,\beta}^{\omega})^{2}]\leq\frac{10}{\epsilon}.
\end{equation*}
Then by Paley-Zygmund inequality, we have
\begin{equation*}
\mathbb{P}\left(\bar{Z}_{N,\beta}^{\omega}\geq\frac{1}{2}\right)\geq\frac{\left(\mathbf{P}(S\in\mathcal{T})-\frac{1}{2}\right)^{2}}{\mathbb{E}[(\bar{Z}_{N,\beta}^{\omega})^{2}]}\geq\frac{\epsilon}{50},
\end{equation*}
where the last inequality holds by choosing $R$ large enough in $\mathcal{T}$.

By using notation
\begin{equation*}
f(\omega):=\log\bar{Z}_{N,\beta}^{\omega},
\end{equation*}
we have
\begin{equation}\label{e312}
\begin{split}
\mathbb{P}\left(\bar{Z}_{N,\beta}^{\omega}\geq\frac{1}{2},|\triangledown f(\omega)|\leq M\right)&=\mathbb{P}\left(\bar{Z}_{N,\beta}^{\omega}\geq\frac{1}{2}\right)-\mathbb{P}\left(\bar{Z}_{N,\beta}^{\omega}\geq\frac{1}{2},|\triangledown f(\omega)|>M\right)\\
&\geq\frac{\epsilon}{50}-\frac{1}{M^{2}}\mathbb{E}\left[|\triangledown f(\omega)|^{2}\mathbbm{1}_{\{\bar{Z}_{N,\beta}^{\omega}\geq\frac{1}{2}\}}\right].
\end{split}
\end{equation}
To compute $\triangledown f(\omega)$, we find that
\begin{equation*}
\begin{split}
\frac{\partial}{\partial\omega_{k,x}}\log\bar{Z}_{N,\beta}^{\omega}&=\frac{\beta}{\bar{Z}_{N,\beta}^{\omega}}\mathbf{E}\left[\exp\left(\beta\sum\limits_{n=1}^{N}\omega_{n,S_{n}}-N\lambda(\beta)\right)\mathbbm{1}_{\{S_{k}=x,S\in\mathcal{T}\}}\right]\\
&\leq\frac{\beta}{\bar{Z}_{N,\beta}}\mathbf{E}\left[\exp\left(\beta\sum\limits_{n=1}^{N}\omega_{n,S_{n}}-N\lambda(\beta)\right)\mathbbm{1}_{\{S_{k}=x\}}\right].
\end{split}
\end{equation*}
Then
\begin{equation*}
\begin{split}
|\triangledown f(\omega)|^{2}&=\sum\limits_{(k,x)\in\mathcal{T}}\left|\frac{\partial}{\partial\omega_{k,x}}\log\bar{Z}_{N,\beta}^{\omega}\right|^{2}\\
&\leq\frac{\beta^{2}}{(\bar{Z}_{N,\beta}^{\omega})^{2}}\sum\limits_{k=1}^{N}\sum\limits_{x\in\mathbb{Z}}\left(\mathbf{E}\left[\exp\left(\beta\sum\limits_{n=1}^{N}\omega_{n,S_{n}}-N\lambda(\beta)\right)\mathbbm{1}_{\{S_{k}=x\}}\right]\right)^{2}.
\end{split}
\end{equation*}
Note that
\begin{equation*}
\begin{split}
&\left(\mathbf{E}\left[\exp\left(\beta\sum\limits_{n=1}^{N}\omega_{n,S_{n}}-N\lambda(\beta)\right)\mathbbm{1}_{\{S_{k}=x\}}\right]\right)^{2}\\
=&\mathbf{E}^{\bigotimes2}\left[\exp\left(\beta\sum\limits_{n=1}^{N}(\omega_{n,S_{n}}+\omega_{n,\tilde{S}_{n}})-2N\lambda(\beta)\right)\mathbbm{1}_{\{S_{k}=\tilde{S}_{k}=x\}}\right].
\end{split}
\end{equation*}
Therefore,
\begin{equation*}
|\triangledown f(\omega)|^{2}\leq\frac{\beta^{2}}{(\bar{Z}_{N,\beta}^{\omega})^{2}}\mathbf{E}^{\bigotimes2}\left[\sum\limits_{k=1}^{N}\mathbbm{1}_{\{S_{k}=\tilde{S}_{k}\}}\exp\left(\beta\sum\limits_{n=1}^{N}(\omega_{n,S_{n}}+\omega_{n,\tilde{S}_{n}})
-2N\lambda(\beta)\right)\right]
\end{equation*}
Then we have
\begin{equation}\label{e317}
\mathbb{E}\left[|\triangledown f(\omega)|^{2}\mathbbm{1}_{\{\bar{Z}_{N,\beta}^{\omega}\geq\frac{1}{2}\}}\right]\leq4\mathbf{E}^{\bigotimes2}\left[\beta^{2}\sum\limits_{k=1}^{N}\mathbbm{1}_{\{S_{k}=\tilde{S}_{k}\}}\exp\left(\gamma(\beta)\sum\limits_{n=1}^{N}\mathbbm{1}_{\{S_{n}=\tilde{S}_{n}\}}\right)\right],
\end{equation}
where
\begin{equation*}
\gamma(\beta):=\lambda(2\beta)-2\lambda(\beta).
\end{equation*}
We denote
\begin{equation*}
Y:=\sum\limits_{n=1}^{N}\mathbbm{1}_{\{S_{n}=\tilde{S}_{n}\}}
\end{equation*}
for short. It is not hard to check that
\begin{equation*}
\lambda(2\beta)-2\lambda(\beta)\sim\beta^{2},\quad\mbox{as}~\beta\to0.
\end{equation*}
Hence, when $\beta$ is sufficiently small, we have
\begin{equation}\label{e321}
\begin{split}
&\mathbf{E}^{\bigotimes2}\left[\beta^{2}\sum\limits_{k=1}^{N}\mathbbm{1}_{\{S_{k}=\tilde{S}_{k}\}}\exp\left(\gamma(\beta)\sum\limits_{n=1}^{N}\mathbbm{1}_{\{S_{n}=\tilde{S}_{n}\}}\right)\right]\\
\leq&\mathbf{E}^{\bigotimes2}\left[\beta^{2}Y\exp((1+\epsilon^{3})\beta^{2}Y)\right]\leq\mathbf{E}^{\bigotimes2}\left[C_{\epsilon}\exp((1+\epsilon^{2})\beta^{2}Y)\right],
\end{split}
\end{equation}
where $C_{\epsilon}$ is a constant such that
\begin{equation*}
x\exp((1+\epsilon^{3})x)\leq C_{\epsilon}\exp((1+\epsilon^{2})x),\quad\forall x\geq0.
\end{equation*}
Again by Lemma \ref{l303},
\begin{equation}\label{e323}
\mathbf{E}^{\bigotimes2}\left[\beta^{2}\sum\limits_{k=1}^{N}\mathbbm{1}_{\{S_{k}=\tilde{S}_{k}\}}\exp\left(\gamma(\beta)\sum\limits_{n=1}^{N}\mathbbm{1}_{\{S_{n}=\tilde{S}_{n}\}}\right)\right]\leq\frac{10C_{\epsilon}}{\epsilon}.
\end{equation}
We can choose $M=M_{\epsilon}=20\sqrt{10C_{\epsilon}}/\epsilon^{2}$ and then combine \eqref{e312}, \eqref{e317}, \eqref{e321}, \eqref{e323}, we then conclude Lemma \ref{l302}.
\end{proof}
Finally, we can now prove Theorem \ref{t104}. Readers should keep in mind that $N=N_{\beta,\epsilon}$.
\begin{proof}[Proof of Theorem \ref{t104}]
Because the environment $\omega$ has a finite moment generating function, we can find some positive constants $C_{2}$ and $C_{3}$, such that
\begin{equation*}
\mathbb{P}(|\omega_{1,0}|\geq t)\leq C_{2}\exp(-C_{3}t).
\end{equation*}
Note that we will focus on the environment with index in $\mathcal{T}$. We can estimate that
\begin{equation}\label{e325}
\mathbb{P}\left(\max\limits_{(n,x)\in\mathcal{T}}|\omega_{n,x}|\geq t\right)\leq C_{4}Na_{N}\exp(-C_{3}t).
\end{equation}
Note that
\begin{equation*}
\left\{\max\limits_{(n,x)\in\mathcal{T}}|\omega_{n,x}|<t\right\}\subset\left\{\omega_{n,x}>-t,\quad\forall (n,x)\in\mathcal{T}\right\}
\end{equation*}
and recall the definition of $\bar{Z}_{N,\beta}^{\omega}$ from \eqref{e306}, then we obtain a rough bound
\begin{equation}\label{e327}
\mathbb{P}\left(\log\bar{Z}_{N,\beta}^{\omega}<-(\beta t+\lambda(\beta))N\right)\leq C_{4}Na_{N}\exp(-C_{3}t).
\end{equation}
We will use \eqref{e327} later to bound the left tail of $\log\hat{Z}_{N,\beta}^{\omega}$ for large $t$.

In order to apply Proposition \ref{p301}, we need to truncate the environment appropriately. We set $\tilde{\omega}_{n,x}:=\omega_{n,x}\mathbbm{1}_{\{|\omega_{n,x}|\leq(\log N)^{2}\}}$ and define
\begin{equation*}
f(\tilde{\omega}):=\log\mathbf{E}\left[\exp\left(\beta\sum\limits_{n=1}^{N}\tilde{\omega}_{n,S_{n}}-N\lambda(\beta)\right)\mathbbm{1}	_{\{S\in\mathcal{T}\}}\right].
\end{equation*}
Then
\begin{equation*}
\begin{split}
&\mathbb{P}\left(\bar{Z}_{N,\beta}^{\omega}\geq\frac{1}{2},|\triangledown\log\bar{Z}_{N,\beta}^{\omega}|\leq M\right)\\
=&\mathbb{P}\left(\bar{Z}_{N,\beta}^{\omega}\geq\frac{1}{2},|\triangledown\log\bar{Z}_{N,\beta}^{\omega}|\leq M,\tilde{\omega}=\omega\right)+\mathbb{P}\left(\bar{Z}_{N,\beta}^{\omega}\geq\frac{1}{2},|\triangledown\log\bar{Z}_{N,\beta}^{\omega}|\leq M,\tilde{\omega}\neq\omega\right)\\
\leq&\mathbb{P}\left(f(\tilde{\omega})\geq-\log2,|\triangledown f(\tilde{\omega})|\leq M\right)+\mathbb{P}(\tilde{\omega}\neq\omega)
\end{split}
\end{equation*}
By Lemma \ref{l302} and \eqref{e325},
\begin{equation*}
\begin{split}
&\mathbb{P}\left(f(\tilde{\omega})\geq-\log2,|\triangledown f(\tilde{\omega})|\leq M\right)\\
\geq&\mathbb{P}\left(\bar{Z}_{N,\beta}^{\omega}\geq\frac{1}{2},|\triangledown\log\bar{Z}_{N,\beta}^{\omega}|\leq M\right)-\mathbb{P}(\tilde{\omega}\neq\omega)\\
\geq&\frac{\epsilon}{100}-C_{4}Na_{N}\exp(-C_{3}(\log N)^{2})\geq\frac{\epsilon}{200},
\end{split}
\end{equation*}
where the last inequality holds for large $N$, i.e., for small $\beta$. Now we apply Proposition \ref{p301} to $f(\tilde{\omega})$ and we obtain
\begin{equation*}
\mathbb{P}\left(f(\tilde{\omega})\leq-\log2-t\right)\leq\frac{400}{\epsilon}\exp\left(-\frac{t^{2}}{C_{1}(\log N)^{4}M^{2}}\right).
\end{equation*}
Finally,
\begin{equation}\label{e332}
\begin{split}
&\mathbb{P}\left(\log\bar{Z}_{N,\beta}^{\omega}\leq-\log2-t\right)\\
=&\mathbb{P}\left(\log\bar{Z}_{N,\beta}^{\omega}\leq-\log2-t,\tilde{\omega}=\omega\right)+\mathbb{P}\left(\log\bar{Z}_{N,\beta}^{\omega}\leq-\log2-t,\tilde{\omega}\neq\omega\right)\\
\leq&\mathbb{P}\left(f(\tilde{\omega})\leq-\log2-t\right)+\mathbb{P}(\tilde{\omega}\neq\omega)\\
\leq&\frac{400}{\epsilon}\exp\left(-\frac{t^{2}}{C_{1}(\log N)^{4}M^{2}}\right)+C_{4}Na_{N}\exp(-C_{3}(\log N)^{2}).
\end{split}
\end{equation}
We can now bounded the left tail of $\log\hat{Z}_{N,\beta}^{\omega}$. Since it is larger than $\log\bar{Z}_{N,\beta}^{\omega}$, we can rewrite \eqref{e327} and \eqref{e332} as
\begin{equation}\label{e333}
\mathbb{P}\left(\log\hat{Z}_{N,\beta}^{\omega}<-(\beta t+\lambda(\beta))N\right)\leq C_{4}Na_{N}\exp(-C_{3}t)
\end{equation}
and respectively,
\begin{equation}\label{e334}
\begin{split}
&\mathbb{P}\left(\log\hat{Z}_{N,\beta}^{\omega}\leq-\log2-t\right)\\
\leq&\frac{400}{\epsilon}\exp\left(-\frac{t^{2}}{C_{1}(\log N)^{4}M^{2}}\right)+C_{4}Na_{N}\exp(-C_{3}(\log N)^{2}).
\end{split}
\end{equation}
For $\log\hat{Z}_{N,\beta}^{\omega}$ with large negative value (for example, it is less than $-N^{2}$), we use the bound \eqref{e333}, which shows that the mass of $\log\hat{Z}_{N,\beta}^{\omega}$ on $(-N^{2},-\infty)$ can be bounded below by some constant $-C$. For $\log Z_{N, \beta}^{\omega}$ with small negative value, we use the bound \eqref{e334}, which shows that the mass of $\log\hat{Z}_{N,\beta}^{\omega}$ is bounded below by $-\tilde{C}_{\epsilon}(\log N)^{2}$ with some constant $\tilde{C}_{\epsilon}$. Therefore, we obtain
\begin{equation*}
p(\beta)\geq\frac{1}{N}\mathbb{E}\left[\log\hat{Z}_{N,\beta}^{\omega}\right]\geq-\frac{C_{5,\epsilon}(\log N)^{2}}{N}\geq-\frac{1}{D^{-1}\left((1-\epsilon)/\beta^{2}\right)^{1-\epsilon}}
\end{equation*}
for $\beta$ small enough, where the last inequality is due to the definition of $N=N_{\beta,\epsilon}$.
\end{proof}
\textbf{Acknowledgements:}~The author would like to acknowledge support from AcRF Tier 1 grant R-146-000-220-112. The author also wants to thank Professor Rongfeng Sun for introducing and discussing this topic, and helping revise this paper. The author is especially grateful to Professor Quentin Berger for sharing his manuscript \cite{Ber17}, in particular, the proof of Theorem \ref{t102} before publication. The author also thanks Professor Francesco Caravenna, who helped the author obtain more insight into this problem when he visited Singapore. Finally, the author would like to thank the unknown referees, who helped the author remove some unnatural assumptions on the underlying random walk $S$ and improve the quality of this paper.
\bibliographystyle{plain}
\bibliography{references}
\end{document}